\newtheorem{thm}{Theorem}[section]
\newtheorem{cor}[thm]{Corollary}
\newtheorem{lem}[thm]{Lemma}
\newtheorem{prop}[thm]{Proposition}
\theoremstyle{definition}
\newtheorem{defn}[thm]{Definition}
\theoremstyle{remark}
\newtheorem{rem}[thm]{Remark}
\theoremstyle{example}
\newtheorem{exm}[thm]{Example}
\theoremstyle{observation}
\theoremstyle{application}
\theoremstyle{note}
\numberwithin{equation}{section}
\newcommand{\vertiii}[1]{{\left\vert\kern-0.25ex\left\vert\kern-0.25ex\left\vert #1
    \right\vert\kern-0.25ex\right\vert\kern-0.25ex\right\vert}}
\begin{document}


\baselineskip=17pt


\title[A generalization of Lipschitz mappings]{A generalization of Lipschitz mappings}
\author[A. K. Karn]{Anil Kumar Karn}
\address{Anil Kumar Karn, School of Mathematical Sciences, National Institute of Science Education and Research Bhubaneswar, An OCC of Homi Bhabha National Institute, P.O. Jatni, Khurda, Odisha 752050, India.}
\email{anilkarn@niser.ac.in}
\author[A. Mandal]{Arindam Mandal$^\dagger$}
\address{Arindam Mandal, School of Mathematical Sciences, National Institute of Science Education and Research Bhubaneswar, An OCC of Homi Bhabha National Institute, P.O. Jatni, Khurda, Odisha 752050, India.}
\email{arindam.mandal@niser.ac.in}
\thanks{Second author is supported by a research fellowship from the Department of Atomic Energy (DAE), Government of India.}
\thanks{$\dagger$ \tt{Corresponding author}}
\date{}

\begin{abstract}
	Using the notion of modulus of continuity at a point of a mapping between metric spaces, we introduce the notion of extensively bounded mappings generalizing that of Lipschitz mappings. We also introduce a metric on it which becomes a norm if the codomain is a normed linear space. We study its basic properties. We also discuss a linearization of an extensively bounded mapping into a bounded linear mapping. As an application, we introduce the notion of extensively bounded operator ideals. We also discuss extensively bounded finite rank and extensively bounded compact mappings and their corresponding operator ideals.
\end{abstract}

\subjclass[2020]{Primary 46B28; Secondary 46B45, 47L20}
\keywords{Modulus of continuity at a point, Lipschitz mappings, extensively bounded mappings, dilation.}

\maketitle
\pagestyle{myheadings}
\markboth{A. K. Karn and A. Mandal}{A generalization of Lipschitz mappings}

\section{Introduction} 

Let $M$ and $N$ be metric spaces. A mapping $f: M \to N$ is said to be \emph{Lipschitz}, if there exists $k > 0$ in $\mathbb{R}$ such that $d(f(x_1), f(x_2)) \le k d(x_1, x_2)$ for all $x_1, x_2 \in M$. The set of all Lipschitz mappings from $M$ into $N$ is denoted by $Lip(M, N)$. If we replace $N$ by a normed linear space $Y$, then $Lip(M, Y)$ is a vector space. In fact, $Lip^b(M, Y)$ (the subspace of all bounded mappings in $Lip(M, Y)$) is a normed linear space under a suitable norm (described later). In the literature, Lipschitz mappings between a pair of Banach spaces which takes $0$ to $0$ have been considered as a natural non-linear generalization of bounded linear maps between Banach spaces. Such mappings and the corresponding operator spaces and related operator ideals have been studied from several points of view in recent years in \cite{LFBS}, \cite{SOLAHFATA}, \cite{APITLM}, \cite{LCO}, \cite{LPCM}, \cite{OSBILFS}, \cite{LOITAP}, \cite{LPSO}, \cite{IRASOFOLS}, \cite{LOHLF}, \cite{OLOI}.  For the basics on this part, we refer to \cite{LA}. 

In this paper, we propose to expand the scope and develop a similar theory in a broader setup. The main tool for this generalization is the notion of \emph{modulus of continuity} (see \cite{GNFA}) of a mapping between metric spaces. This notion was formally introduced by H. Lebesgue in 1910 for the functions of real variables. Using the modulus of continuity, we can determine Lipschitz mappings. If we consider a modification in the modulus of continuity to make it point specific, it leads to a generalization of Lipschitz mappings making them point specific as well. If the metric spaces have fixed designated points and if a pointwise Lipschitz mapping takes the designated point in the domain to the designated point of the codomain, we call it \emph{extensively bounded}. 

It may be recalled that the space of Lipschitz mappings between metric spaces (which take the designated point in the domain to the designated point of the codomain) do not have any metric defined in it. However, in the space of extensively bounded mappings, we have been able to introduce a metric. Besides, the set of extensively mappings from a metric space into a normed linear space again form a vector space and we can introduce a norm in it as well. Moreover, if we consider the composition of two suitable extensively bounded mappings, it results in another extensively bounded mapping. In this sense we obtain a `multiplication' between two extensively bounded mappings defined on a normed space. Even though this multiplication is not bilinear, still we get an operator algebra or more generally, an operator ideal like structure. In this sense, we have introduced a category bigger than that of the spaces of Lipschitz mappings between pairs of Banach spaces.  From this paper onwards, we shall initiate a study of such mappings and spaces from various points of view. 

Other than introduction in Section $1$, the paper is divided in remaining four more sections. In Section $2$, we consider the notion of \emph{modulus of continuity at a point} of a mapping between metric spaces. As a consequence, we introduce the notion of \emph{Lipschitz condition at a point} and obtain its equivalence in the usual sense of Lipschitz condition. Next, we introduce a semi-norm and consequently, a norm on the space of all the mappings from a metric space into a normed linear space which are Lipschitz at a designated point in the domain. We prove that this space is linearly isometric to an $\ell_{\infty}$-space. We characterize a bounded Lipschitz mapping in terms of pointwise Lipschitz mappings. 

In Section $3$, we discuss a generalization of the space of all Lipschitz mappings between two metric spaces with designated points which maps the designated point to the designated point and introduce the notion of \emph{extensively bounded mappings} between two metric spaces with designated points which again maps the designated point to the designated point. We study a metric and completeness in the space of extensively bounded mappings. We show that when the codomain is replaced by a normed linear space, the space of extensively bounded mappings also becomes so. We also discuss its relation with the Lipschitz mappings that vanish at the designated point. 

In Section $4$, we first describe a Banach space $F_e(M)$ which is a predual of the Banach space $M^e$ of real valued extensively bounded mappings on a metric space $M$. This structure enable us to discuss a linearization of normed linear space valued extensively bounded mappings. We discuss the position of $X^*$ in $X^e$. Next, we describe a (non-bilinear) multiplication between a pair of extensively bounded mappings between suitable normed linear spaces and a corresponding (non-linear) morphism.

In Section $5$, as application of the theory developed in the previous sections, we propose to establish a theory of \emph{extensively bounded operator ideals}. In addition to this, we describe two basic examples. We introduce the notion of (non-linear) finite rank extensively bounded mappings and that of extensively compact mappings. We also discuss the corresponding extensively bounded operator ideals.
 
\section{Modulus of continuity at a point}

Let $M$ and $N$ be metric spaces and let $f: M \to N$. For $t \in \mathbb{R}$ with $t > 0$, we define 
$$\omega_f(t) := \sup \lbrace d(f(x), f(y)): d(x, y) \le t \rbrace.$$
Then $\omega_f: (0, \infty) \to [0, \infty]$ and is called \emph{modulus of continuity of $f$}.  Note that $\omega_f$ is an increasing function. It is also well known (see, for example, \cite[chapter 1]{GNFA}) that 
\begin{itemize}
	\item $f$ is uniformly continuous in $M$ if and only if $\lim\limits_{t \to 0^+} \omega_f(t) = 0$; and 
	\item $f$ is Lipschitz in $M$ if and only if $\sup \lbrace \frac{\omega_f(t)}{t}: t > 0 \rbrace < \infty$.
\end{itemize}
The modulus of continuity does not describe non-uniformly continuous mappings directly. But it can be done with a slight modification. We fix $x_0 \in M$ and for $t \in \mathbb{R}$ with $t > 0$, we define 
$$\omega_f^{x_0}(t) := \sup \lbrace d(f(x), f(x_0)): d(x, x_0) \le t \rbrace.$$ 
Then $\omega_f^{x_0}: (0, \infty) \to [0, \infty]$ again is an increasing function and $f$ is continuous at $x_0$ if and only if $\lim\limits_{t \to 0^+} \omega_f^{x_0}(t) = 0$. We shall call $\omega_f^{x_0}$ the \emph{modulus of continuity of $f$ at $x_0$}. 

Imitating the modulus of continuity, we consider the following notion introduced by Messerschmidt, Miek in \cite[Definition 2.6]{APLST}.
\begin{defn}
	Let $M$ and $N$ be metric spaces and let $f: M \to N$. For a fixed $x_0 \in M$, we say that $f$ is \emph{Lipschitz at $x_0$}, if 
	$$\sup \left\lbrace \frac{\omega_f^{x_0}(t)}{t}: t > 0 \right\rbrace < \infty.$$ 
	The set of all functions $f: M \to N$ which are Lipschitz at $x_0$ is denoted by $Lip^{x_0}(M, N)$. 
\end{defn}
\begin{prop}
	Let $M$ and $N$ be metric spaces and let $f: M \to N$. Then for any $x_0 \in M$, we have $f \in Lip^{x_0}(M, N)$ if and only if there exists $k > 0$ such that $d(f(x), f(x_0)) \le k d(x, x_0)$ for all $x \in M$.
\end{prop}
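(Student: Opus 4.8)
The plan is to prove both implications by directly unwinding the definition of $\omega_f^{x_0}$ together with the defining supremum of $Lip^{x_0}(M,N)$; the equivalence is essentially a translation between a pointwise inequality and a uniform bound on the ratio $\omega_f^{x_0}(t)/t$.

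For the sufficiency direction, I would assume there exists $k > 0$ with $d(f(x), f(x_0)) \le k\, d(x, x_0)$ for all $x \in M$, and fix an arbitrary $t > 0$. For any $x$ satisfying $d(x, x_0) \le t$ the hypothesis gives $d(f(x), f(x_0)) \le k\, d(x, x_0) \le k t$; taking the supremum over all such $x$ yields $\omega_f^{x_0}(t) \le k t$, hence $\omega_f^{x_0}(t)/t \le k$. Since $t > 0$ was arbitrary, $\sup\{\omega_f^{x_0}(t)/t : t > 0\} \le k < \infty$, so $f \in Lip^{x_0}(M,N)$.

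For the necessity direction, I would set $K := \sup\{\omega_f^{x_0}(t)/t : t > 0\}$, which is finite by hypothesis, and verify the pointwise inequality with this constant. Given $x \in M$, if $x = x_0$ both sides vanish and the inequality is trivial. If $x \neq x_0$, put $t := d(x, x_0) > 0$; then $x$ lies in the set $\{z : d(z, x_0) \le t\}$, so by the definition of $\omega_f^{x_0}$ we get $d(f(x), f(x_0)) \le \omega_f^{x_0}(t) \le K t = K\, d(x, x_0)$. The only point requiring care is the normalization: the definition demands a strictly positive constant, so if $K = 0$ (which forces $f$ to be constant equal to $f(x_0)$) I would simply choose any $k > 0$, and otherwise take $k = K$; in either case the desired inequality holds for all $x \in M$.

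Neither implication presents a genuine obstacle, since the content lies entirely in correctly converting the defining supremum into the pointwise estimate and back. The single delicate point is the boundary behaviour at $x = x_0$, where $d(x, x_0) = 0$ and one cannot legitimately form the ratio $\omega_f^{x_0}(t)/t$; handling this case separately, and invoking the positive-constant convention when $K = 0$, removes the only subtlety in the argument.
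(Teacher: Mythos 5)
Your proof is correct and follows essentially the same route as the paper: the necessity direction is identical (set $t = d(x,x_0)$ and bound $d(f(x),f(x_0))$ by $\omega_f^{x_0}(t) \le K t$), and your extra care with the $x = x_0$ case and the $K = 0$ normalization is a harmless refinement. In the sufficiency direction you bound the supremum directly via $\omega_f^{x_0}(t) \le kt$, which is slightly cleaner than the paper's $\epsilon$-approximation of the supremum, but it is the same definition-unwinding argument in substance.
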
 
\begin{proof}
	First, we assume that $f$ is Lipschitz at $x_0$ for some $x_0 \in M$. Then $k := \sup \left\lbrace \frac{\omega_f^{x_0}(t)}{t}: t > 0 \right\rbrace < \infty$. Let $x \in M \setminus \{x_{0}\}$ and put $t = d(x,x_0) > 0$. Then by definition, $d(f(x), f(x_0)) \leq \omega_{f}^{x_{0}}(t)$ so that  
	$$\frac{d(f(x), f(x_0))}{d(x, x_0)} \leq \frac{\omega_{f}^{x_0}(t)}{t} \leq k.$$ 
	Thus $d(f(x), f(x_0)) \le k d(x, x_0)$ for all $x \in M$. 
	
	Conversely, we assume that $d(f(x), f(x_0)) \le k d(x, x_0)$ for all $x \in M$. We show that $\sup \left\lbrace \frac{\omega_f^{x_0}(t)}{t}: t > 0 \right\rbrace \le k$. Let $t > 0$ and $\epsilon > 0$. Then we can find $x \in M$, $d(x, x_0) \le t$ such that $d(f(x), f(x_0)) > \omega_f^{x_0}(t) - \epsilon t$. Thus 
	$$\omega_f^{x_0}(t) - \epsilon t < d(f(x), f(x_0)) \le k d(x, x_0) \le k t.$$ 
	Now it follows that $\frac{\omega_f^{x_0}(t)}{t} - \epsilon < k$ for all $t > 0$ and $\epsilon > 0$. Thus $\sup \left\lbrace \frac{\omega_f^{x_0}(t)}{t}: t > 0 \right\rbrace \le k < \infty$. 
\end{proof}
The above result establishes that the two notions, namely pointwise $\alpha$-Lipschitz and strongly pointwise $\alpha$-Lipschitz, considered by Messerschmidt, Miek in \cite[Definition 2.6]{APLST} are actually equivalent.  

We observe that the notion of the Lipschitz condition at a point is different from that of \emph{local Lipschitz at a point}. ($f$ is said to be \emph{locally Lipschitz at a point $x_0$}, if there exists a neighbourhood $N_{x_0}$ of $x_0$ in $M$ such that $f$ is Lipschitz in $N_{x_0}$ (see, for example, \cite[Chapter 2]{LF}).) 

We say that $f$ is \emph{pointwise Lipschitz in $M$}, if $f \in Lip^x(M, N)$ for each $x \in M$. The set of all such mappings is denoted by $Lip^P(M, Y)$. Thus $Lip^P(M, Y) = \bigcap\limits_{x \in M} Lip^x(M, N)$. 

If we replace $N$ by a (real) normed linear space $Y$, then $Lip^{x_0}(M, Y)$ is also a (real) vector space under pointwise operations. Moreover, the value $\Vert f \Vert_{x_0} := \sup \left\lbrace \frac{\omega_f^{x_0}(t)}{t}: t > 0 \right\rbrace$ for all $f \in Lip^{x_0}(M, Y)$ determines a semi-norm $\Vert\cdot\Vert_{x_0}$ on $Lip^{x_0}(M, Y)$. In fact, we have 
\begin{eqnarray*}
	\Vert f \Vert_{x_0} &=& \inf \lbrace k > 0: \Vert f(x) - f(x_0) \Vert \le k d(x, x_0) ~ \mbox{for all} ~ x \in M \rbrace \\ 
	&=& \sup \left\lbrace \frac{\Vert f(x) - f(x_0) \Vert}{d(x, x_0)}: x \in M, x \ne x_0 \right\rbrace  
\end{eqnarray*}
for all $f \in Lip^{x_0}(M, Y)$. Also $\Vert f \Vert_{x_0} = 0$ if and only if $f$ is a constant function. Thus for $f \in Lip^{x_0}(M, Y)$ the value 
$$\Vert f \Vert_{L^{x_0}} := \max \lbrace \Vert f \Vert_{x_0}, \Vert f(x_0) \Vert \rbrace$$ 
determines a norm on $Lip^{x_0}(M, Y)$. We record this fact and some more in the next result. 
\begin{prop}\label{Lip0p}
	Let $M$ be a metric space and $Y$ a normed linear space. Then $(Lip^x(M, Y), \|.\|_{L^{x}})$ is isometrically isomorphic to $\ell_{\infty}(M, Y)$. Thus $(Lip^x(M, Y), \|.\|_{L^{x}})$ is a Banach space, if and only if $Y$ is so. 
\end{prop}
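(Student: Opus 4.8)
The plan is to exhibit an explicit linear map $T \colon Lip^{x}(M,Y) \to \ell_{\infty}(M,Y)$ and verify directly that it is a surjective isometry; the completeness statement will then follow from the corresponding well-known fact for $\ell_{\infty}(M,Y)$. For $f \in Lip^{x}(M,Y)$ I would define $T(f) = g$ by setting $g(x) = f(x)$ and, for $y \neq x$,
$$g(y) = \frac{f(y) - f(x)}{d(y,x)}.$$
The first thing to check is that $T$ is well-defined, i.e. that $g$ is bounded, and this is exactly where the hypothesis $f \in Lip^{x}(M,Y)$ enters: for $y \neq x$ one has $\|g(y)\| = \frac{\|f(y)-f(x)\|}{d(y,x)} \le \|f\|_{x}$, while $\|g(x)\| = \|f(x)\|$. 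Taking the supremum over all $m \in M$ gives $\|g\|_{\infty} = \max\{\|f\|_{x}, \|f(x)\|\} = \|f\|_{L^{x}}$, which simultaneously shows $g \in \ell_{\infty}(M,Y)$ and that $T$ preserves norms.

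Next I would verify linearity, which is immediate since $f \mapsto f(x)$ and $f \mapsto f(y) - f(x)$ are linear in $f$ and $d(y,x)$ is a fixed positive scalar for each fixed $y \neq x$, so the defining formula for $g$ is linear entrywise in $f$. Injectivity follows from the isometry already established (or directly: $T(f)=0$ forces $f(x)=0$ and $f(y)=f(x)$ for every $y$, whence $f \equiv 0$). For surjectivity I would construct the inverse explicitly: given $g \in \ell_{\infty}(M,Y)$, define $f(x) = g(x)$ and $f(y) = g(x) + d(y,x)\,g(y)$ for $y \neq x$; then $\frac{\|f(y)-f(x)\|}{d(y,x)} = \|g(y)\| \le \|g\|_{\infty} < \infty$, so $f \in Lip^{x}(M,Y)$ and $T(f)=g$.

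Having established that $T$ is a surjective linear isometry, the isometric isomorphism $(Lip^{x}(M,Y), \|\cdot\|_{L^{x}}) \cong \ell_{\infty}(M,Y)$ is proved. The final assertion then reduces to the standard fact that $\ell_{\infty}(M,Y)$ is complete if and only if $Y$ is: completeness of $Y$ makes the sup-norm space of $Y$-valued bounded functions complete by a routine pointwise-limit argument, and conversely $Y$ embeds isometrically as the closed subspace of constant functions in $\ell_{\infty}(M,Y)$ (using $M \neq \emptyset$), so completeness transfers back. I do not anticipate a serious obstacle here; the only point that requires genuine thought is choosing the normalization $g(y) = (f(y)-f(x))/d(y,x)$ so that $\|g\|_{\infty}$ matches $\|f\|_{L^{x}}$ exactly, and correctly absorbing the value $\|f(x)\|$ through the slot $m = x$ of $\ell_{\infty}(M,Y)$. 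Everything else is a direct verification.
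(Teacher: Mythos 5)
Your proof is correct and essentially identical to the paper's: your map $T$ is exactly the inverse of the paper's isometry $\Phi_x(f)(x') = d(x',x)f(x') + f(x)$, so the two arguments use the same bijection written in opposite directions, with the same norm computation $\|T(f)\|_\infty = \max\{\|f\|_x, \|f(x)\|\} = \|f\|_{L^x}$. Your added remark on the ``only if'' direction of completeness (embedding $Y$ as the constant functions) slightly fleshes out what the paper leaves implicit, but the approach is the same.
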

\begin{proof}
	Let $f \in \ell_{\infty}(M, Y)$. Define $\Phi_x(f): M \to Y$ given by 
	$$\Phi_x(f)(x') = d(x', x) f(x') + f(x)$$ 
	for all $x' \in M$. Then 
	$$\Vert \Phi_x(f)(x') - \Phi_x(f)(x) \Vert = \Vert f(x') \Vert d(x', x) \le \Vert f \Vert_{\infty} d(x', x)$$ 
	for all $x' \in M$. Thus $\Phi_x(f) \in Lip^x(M, Y)$ for all $f \in \ell_{\infty}(M, Y)$. Now, it follows that $\Phi_x: \ell_{\infty}(M, Y) \to (Lip^x(M, Y), \|.\|_{L^{x}})$ is a linear mapping. Further 
	\begin{eqnarray*}
		\Vert \Phi_x(f) \Vert_{L^x} &=& \sup \left\lbrace\frac{\Vert \Phi_x(f)(x') - \Phi_x(f)(x) \Vert}{d(x', x)}, \Vert f(x) \Vert: x' \in M \setminus \lbrace x \rbrace  \right\rbrace \\ 
		&=& \sup \lbrace \Vert f(x') \Vert: x' \in M \rbrace \\ 
		&=& \Vert f \Vert_{\infty}
	\end{eqnarray*} 
	for all $f \in \ell_{\infty}(M, Y)$ so that $\Phi_x$ is a linear isometry. Finally, if $g \in Lip^x(M, Y)$, we set 
	$$h(x') = \begin{cases}
		\frac{g(x') - g(x)}{d(x', x)}, &~ \mbox{if} ~ x' \ne x \\ 
		g(x), &~ \mbox{if} ~ x' = x.
	\end{cases}$$
	As $g \in Lip^x(M, Y)$, there exists $k > 0$ such that $\Vert g(x') - g(x) \Vert \le k d(x', x)$ for all $x' \in M$. Thus $\sup \lbrace \Vert h(x') \Vert: x' \in M \rbrace \le k$ so that $h \in \ell_{\infty}(M, Y)$. Also, then $\Phi_x(h) = g$ so that $\Phi_x$ is the required isometry.
\end{proof} 
Let $Lip(M, Y)$ denote the set of all Lipschitz mappings from $M$ into $Y$ where $M$ is a metric space and $Y$ is a normed linear space. Also then $Lip^b(M, N)$ denotes the set of bounded mappings in $Lip(M, N)$. Let us recall that for $f \in Lip^b(M, Y)$ the value 
$$\Vert f \Vert_L := \sup \left\lbrace\frac{\Vert f(x_1) - f(x_2) \Vert}{d(x_1, x_2)}, \Vert f \Vert_{\infty}: x_1, x_2 \in M ~ \mbox{with} ~ x_1 \ne x_2  \right\rbrace$$ 
determines a norm $\Vert\cdot\Vert_L$ on $Lip^b(M, Y)$.(see \cite[Chapter 2, pp-35]{LA}) Further, $(Lip^b(M, Y), \Vert\cdot\Vert_L)$ is a Banach space whenever $Y$ is so. We note that $(Lip^x(M, Y), \|.\|_{L^{x}})$ generalizes $(Lip^b(M, Y), \Vert\cdot\Vert_L)$ in the following sense. 

Consider $f \in \bigcap\limits_{x \in M} Lip^x(M, Y)$. Then 
\begin{eqnarray*}
	\sup\limits_{x \in M} \Vert f \Vert_{L^x} &=& \sup\limits_{x \in M} \sup \left\lbrace \frac{\Vert f(x') - f(x) \Vert}{d(x', x)}, \Vert f(x) \Vert: x' \in M \setminus \lbrace x \rbrace \right\rbrace \\ 
	&=& \max \left\lbrace \sup\limits_{x, x' \in M; x \ne x'} \left(\frac{\Vert f(x') - f(x) \Vert}{d(x', x)} \right), \sup\limits_{x \in M} \Vert f(x) \Vert \right\rbrace 
\end{eqnarray*}
Thus, if $f$ is bounded in $M$ with $\sup\limits_{x \in M} \Vert f \Vert_x < \infty$, then $f \in Lip^b(M, Y)$. The converse may be verified in a routine way. We summarize the observation in the following result. 
\begin{prop}\label{Lip0}
	Let $M$ be a metric space $M$ and $Y$ a normed linear space. Then $f \in Lip^b(M, Y)$ if and only if $f$ is bounded in $M$ and $f \in \bigcap\limits_{x \in M} Lip^x(M, Y)$ with $\sup\limits_{x \in M} \Vert f \Vert_x < \infty$. 
\end{prop}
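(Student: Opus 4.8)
The plan is to reduce both implications to the pointwise characterization established earlier in this section, namely that $f \in Lip^x(M, Y)$ if and only if there exists $k > 0$ with $\Vert f(x') - f(x) \Vert \le k\, d(x', x)$ for all $x' \in M$, in which case $\Vert f \Vert_x$ is precisely the least such constant. Once this is in hand, the proposition becomes a matter of matching up the relevant suprema, and in fact the displayed computation immediately preceding the statement already carries out the essential bookkeeping.

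First I would treat the forward direction. Assume $f \in Lip^b(M, Y)$. Boundedness of $f$ is immediate from the definition of $Lip^b(M, Y)$, so it remains to control $\sup_{x \in M} \Vert f \Vert_x$. Since $f$ is Lipschitz, there is $k > 0$ with $\Vert f(x_1) - f(x_2) \Vert \le k\, d(x_1, x_2)$ for all $x_1, x_2 \in M$. Fixing the second argument at an arbitrary $x \in M$, this is exactly the pointwise Lipschitz inequality at $x$, so the earlier proposition yields $f \in Lip^x(M, Y)$ with $\Vert f \Vert_x \le k$. As $x$ was arbitrary, $f \in \bigcap_{x \in M} Lip^x(M, Y)$ and $\sup_{x \in M} \Vert f \Vert_x \le k < \infty$.

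For the converse, set $k := \sup_{x \in M} \Vert f \Vert_x < \infty$. Given any $x_1, x_2 \in M$ with $x_1 \ne x_2$, I would apply the pointwise bound at the base point $x_2$: since $\Vert f \Vert_{x_2} \le k$, the earlier proposition gives $\Vert f(x_1) - f(x_2) \Vert \le \Vert f \Vert_{x_2}\, d(x_1, x_2) \le k\, d(x_1, x_2)$. Hence $f$ is Lipschitz, and combined with the assumed boundedness this places $f$ in $Lip^b(M, Y)$.

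I do not expect a genuine obstacle here; the only point requiring a little care is keeping track of the two constituents of the competing norms. The boundedness hypothesis supplies $\Vert f \Vert_\infty = \sup_{x \in M} \Vert f(x) \Vert < \infty$, which is the sup-norm part of $\Vert f \Vert_L$, while the pointwise seminorms $\Vert f \Vert_x$ assemble into the global Lipschitz constant. Indeed, the displayed identity $\sup_{x \in M} \Vert f \Vert_{L^x} = \max\left\{ \sup_{x \ne x'} \frac{\Vert f(x') - f(x) \Vert}{d(x', x)},\ \sup_{x \in M} \Vert f(x) \Vert \right\} = \Vert f \Vert_L$ makes the equivalence of the two formulations transparent, so the argument ultimately amounts to a direct matching of these two suprema.
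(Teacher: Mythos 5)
Your proof is correct and takes essentially the same route as the paper: the paper derives the proposition from the supremum interchange $\sup_{x \in M} \Vert f \Vert_{L^x} = \max\left\lbrace \sup_{x \ne x'} \frac{\Vert f(x') - f(x) \Vert}{d(x',x)},\ \sup_{x \in M} \Vert f(x) \Vert \right\rbrace$, which is exactly the identity you invoke in your closing paragraph. Your unpacking of that identity into two explicit implications via the pointwise characterization of $\Vert f \Vert_x$ is just a more detailed rendering of the same bookkeeping the paper calls ``routine.''
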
 
\begin{rem}
	Since $(Lip^x(M, Y),\|.\|_{L^{x}})$ is isometrically isomorphic to $\ell_{\infty}(M, Y)$ for any $x \in M$, we get that $(Lip^{x_1}(M, Y), \Vert\cdot\Vert_{L^{x_1}})$ is isometrically isomorphic to $(Lip^{x_2}(M, Y), \Vert\cdot\Vert_{L^{x_2}})$ for all $x_1, x_2 \in M$. 
\end{rem}
\begin{prop}
	Let $X$ and $Y$ be normed linear spaces. Assume that $x_1, x_2 \in X$ with $x_1 \ne x_2$ and consider the surjective isometry 
	$$\Phi_{x_2} \circ \Phi_{x_1}^{-1}: (Lip^{x_1}(M, Y), \Vert\cdot\Vert_{L^{x_1}}) \to (Lip^{x_2}(M, Y), \Vert\cdot\Vert_{L^{x_2}}).$$ 
	If $f \in Lip^{x_1}(X, Y) \bigcap Lip^{x_2}(X, Y)$ with $\Phi_{x_2} \circ \Phi_{x_1}^{-1}(f) = f$, then $f(x) = 0$ for all $x \in X$ with $\Vert x - x_1 \Vert \ne \Vert x - x_2 \Vert$. In particular, $f(x_1) = 0 = f(x_2)$. 
\end{prop}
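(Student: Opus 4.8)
The plan is to convert the fixed-point equation into a single functional identity and then extract the conclusion by evaluating that identity at a few well-chosen points. Since $\Phi_{x_2}$ is a bijection (Proposition \ref{Lip0p}), the hypothesis $\Phi_{x_2}\circ\Phi_{x_1}^{-1}(f)=f$ is equivalent to $\Phi_{x_1}^{-1}(f)=\Phi_{x_2}^{-1}(f)$. Writing $h_i:=\Phi_{x_i}^{-1}(f)\in\ell_\infty(X,Y)$ and recalling the explicit inverse from the proof of Proposition \ref{Lip0p},
$$h_i(x') = \frac{f(x') - f(x_i)}{\|x' - x_i\|}\ \ (x'\ne x_i),\qquad h_i(x_i)=f(x_i),$$
the hypothesis becomes the identity $h_1(x')=h_2(x')$ for every $x'\in X$. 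This is precisely where the assumption $f\in Lip^{x_1}(X,Y)\cap Lip^{x_2}(X,Y)$ enters: it guarantees that both $h_1$ and $h_2$ are well defined, so that the equation typechecks.

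Next I would evaluate this identity on the metric bisector of $x_1$ and $x_2$. The midpoint $m=\tfrac12(x_1+x_2)$ satisfies $m\ne x_1$, $m\ne x_2$ and $\|m-x_1\|=\|m-x_2\|=\tfrac12\|x_1-x_2\|>0$; equating $h_1(m)=h_2(m)$ and cancelling the common positive denominator gives $f(m)-f(x_1)=f(m)-f(x_2)$, that is, $f(x_1)=f(x_2)$. I would then evaluate the identity at the base point $x_2$, where $h_2(x_2)=f(x_2)$ while $h_1(x_2)=\big(f(x_2)-f(x_1)\big)/\|x_2-x_1\|$; combined with $f(x_1)=f(x_2)$ this forces $f(x_2)=0$, and hence $f(x_1)=f(x_2)=0$, settling the ``in particular'' assertion.

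Finally, for a general $x$ with $\|x-x_1\|\ne\|x-x_2\|$, note that either $x\in\{x_1,x_2\}$ (already shown to map to $0$) or $x\notin\{x_1,x_2\}$, in which case clearing denominators in $h_1(x)=h_2(x)$ yields
$$\big(\|x-x_2\|-\|x-x_1\|\big)\,f(x) = \|x-x_2\|\,f(x_1)-\|x-x_1\|\,f(x_2)=0,$$
and since the scalar $\|x-x_2\|-\|x-x_1\|$ is nonzero, $f(x)=0$. The only substantive step is the bisector evaluation: using the base points $x_1,x_2$ alone one only recovers $f(x_2)=-f(x_1)$ together with $(\,\|x_1-x_2\|-2\,)\big(f(x_2)-f(x_1)\big)=0$, which leaves the critical separation $\|x_1-x_2\|=2$ undetermined. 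I therefore expect the key insight — and the main point to get right — to be the observation that the bisector is nonempty (take the midpoint) and that $h_1=h_2$ there collapses instantly to $f(x_1)=f(x_2)$, after which everything else is routine arithmetic.
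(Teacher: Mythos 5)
Your proof is correct and takes essentially the same route as the paper's: both reduce the fixed-point hypothesis to the explicit identity $\frac{f(x)-f(x_1)}{\|x-x_1\|}=\frac{f(x)-f(x_2)}{\|x-x_2\|}$ via the formula for $\Phi_{x_i}^{-1}$, and both hinge on evaluating at the midpoint $\tfrac12(x_1+x_2)$ to eliminate the residual degree of freedom (the paper first derives $f(x_1)=-f(x_2)$ from the base points and then feeds the midpoint into the cleared-denominator identity, while you use the midpoint first to get $f(x_1)=f(x_2)$ and then a base point --- the same evaluations in a different order). Your preliminary rewriting of the hypothesis as $\Phi_{x_1}^{-1}(f)=\Phi_{x_2}^{-1}(f)$ is a mild streamlining that spares you the paper's two-case computation of the composed map $\Phi_{x_2}\circ\Phi_{x_1}^{-1}$, but the underlying argument is the same.
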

\begin{proof}
	First we note that 
	$$\Phi_{x_2} \circ \Phi_{x_1}^{-1}(f)(x) = 
	\begin{cases} \frac{\Vert x - x_2 \Vert}{\Vert x - x _1 \Vert} (f(x) - f(x_1)) + \frac{f(x_2) - f(x_1)}{\Vert x_2 - x_1 \Vert}, ~ &\mbox{if} ~ x \ne x_1 \\ 
		\Vert x_1 - x_2 \Vert f(x_1) +  \frac{f(x_2) - f(x_1)}{\Vert x_2 - x_1 \Vert}, ~ &\mbox{if} ~ x = x_1. 
	\end{cases}$$ 
	Thus as $f(x) = \Phi_{x_2} \circ \Phi_{x_1}^{-1}(f)(x)$, we get $f(x_2) = \frac{f(x_2) - f(x_1)}{\Vert x_1 - x_2 \Vert}$ and hence $f(x_2) = \Vert x_1 - x_2 \Vert f(x_2) + f(x_1)$. Also $f(x_1) = \Vert x_1 - x_2 \Vert f(x_1) + f(x_2)$. 
	Now it follows that 
	$$f(x_1) = - f(x_2) = \frac{f(x_1) - f(x_2)}{\Vert x_1 - x_2 \Vert}.$$
	Also, on simplifying, we get 
	$$\Vert x - x_1 \Vert (f(x) + f(x_1)) = \Vert x - x_2 \Vert (f(x) - f(x_1))$$ 
	or, equivalently,
	$$(\Vert x - x_2 \Vert - \Vert x - x_1 \Vert) f(x) = (\Vert x - x_2 \Vert + \Vert x - x_1 \Vert) f(x_1)$$ 
	for all $x \in X$. Put $x_0 = \frac 12 (x_1 + x_2)$. Then $\Vert x_0 - x_1 \Vert = \Vert x_0 - x_2 \Vert = \frac 12 \Vert x_1 - x_2 \Vert > 0$. Thus $f(x_1) = 0$ as $\Vert x_0 - x_2 \Vert + \Vert x_0 - x_1 \Vert = \Vert x_1 - x_2 \Vert > 0$ and consequently, $f(x_2) = 0$ too. Therefore, $(\Vert x - x_2 \Vert - \Vert x - x_1 \Vert) f(x) = 0$ for all $x \in X$. Hence the result follows. 
\end{proof}

\section{extensively bounded mappings}

Let $X$ and $Y$ be normed linear spaces and consider 
$$Lip_0(X, Y) := \lbrace f \in Lip(X, Y): f(0) = 0 \rbrace.$$ 
We recall that this space has been studied in the literature (see, for example, \cite[Chapter 1]{LA}) as a non-linear generalization of bounded linear mappings between the normed linear spaces. We propose a further generalization and consider a particular (closed) subspace of $Lip^x(M, Y)$ as under: 
$$Lip_x^x(M, Y) := \lbrace f \in Lip^x(M, Y): f(x) = 0 \rbrace.$$ 
For $f \in Lip_x^x(M, Y)$, we have 
$$\Vert f \Vert_{L^x} = \Vert f \Vert_x = \sup \left\lbrace\frac{\Vert f(x') \Vert}{d(x', x)}: x' \in M \setminus \lbrace x \rbrace  \right\rbrace.$$ 
Thus $(Lip_x^x(M, Y), \Vert\cdot\Vert_x)$ is isometrically isomorphic to $\ell_{\infty}(M \setminus \lbrace x \rbrace, Y)$ and $(Lip_x^x(M, Y), \Vert\cdot\Vert_x)$ is a Banach space, if and only if $Y$ is so. Note that $f: M \to Y$ is in $Lip_x^x(M, Y)$ if and only if there exists $k > 0$ such that $\Vert f(x') \Vert \le k d(x', x)$ for all $x' \in M$. Thus  
$$\Vert f \Vert_x = \inf \lbrace k > 0: \Vert f(x') \Vert \le k d(x', x) ~ \mbox{for all} ~ x' \in M \rbrace.$$ 

Let us fix $x \in M$ and consider the space 
$$Lip_x(M, Y) := \lbrace f: M \to Y| f ~ \mbox{is Lipschitz in} ~ M ~ \mbox{and} ~ f(x) = 0 \rbrace$$ 
together with the norm $Lip(\cdot)$ given by 
\begin{eqnarray*}
	Lip(f) &:=& \inf \lbrace k > 0: \Vert f(x_1) - f(x_2) \Vert \le k d(x_1, x_2) ~ \mbox{for all} ~ x_1, x_2 \in M \rbrace \\
	&=& \sup \left\lbrace \frac{\Vert f(x_1) - f(x_2) \Vert}{d(x_1, x_2)}: x_1, x_2 \in M ~ \mbox{with} ~ x_1 \ne x_2 \right\rbrace
\end{eqnarray*} 
for all $f \in Lip_x(M, Y)$. Then $(Lip_x(M, Y), Lip(\cdot))$ is a normed linear space and is complete whenever $Y$ is so.( see \cite[Chapter 8, Theorem 8.1.3]{LF} ) Thus $(Lip_x^x(M, Y), \Vert\cdot\Vert_x)$ is a natural generalization of $(Lip_x(M, Y), Lip(\cdot))$. 

Being isometric to $\ell_{\infty}(M \setminus \lbrace x \rbrace, Y)$, $(Lip_x^x(M, Y), \Vert\cdot\Vert_x)$ may appear in a poor light. But we shall show in another paper that when $M$ is also replaced by a normed linear space $X$, $(Lip_x^x(X, Y), \Vert\cdot\Vert_x)$ assumes a non-commutative structure which is not known in $\ell_{\infty}$ spaces. Further, this facet leads us to an operator ideal theory generalizing that of the Lipschitz operators. Moreover, we shall later infuse continuity to make it less general. 

Towards this goal, we re-describe $(Lip_x^x(M, \mathbb{R})$)  with a simpler notation. Since for any $x_1, x_2 \in M$, $(Lip_{x_1}^{x_1}(M, Y), \Vert\cdot\Vert_{x_1})$ is isometrically isomorphic to $(Lip_{x_2}^{x_2}(M, Y), \Vert\cdot\Vert_{x_1})$, there is no loss of generality in fixing a designated point. We generally describe it as $0$. 
\begin{defn}
	Let $M, N$ be metric spaces with $0$ as the distinguished point. A mapping $f: M \to N$ is said to be \emph{extensively bounded}, if there exists $k > 0$ such that $d( f(x),0) \le k d(x, 0)$ for all $x \in M$. The set of all extensively bounded mappings from $M$ to $N$ is denoted as $E(M, N)$. 
\end{defn}
\begin{thm} \label{E.S  is B.sp}
    Let $M$ and $N$ be metric spaces with $0$ as distinguished points. 
    \begin{enumerate}
        \item The map $d_{e}: E(M, N) \times E(M, N) \xrightarrow{} [0,\infty)$ given by $$d_{e}(T, S) := \sup\limits_{x \neq 0} \frac{d(T(x), S(x))}{d(x, 0)},$$
        for all $T, S \in E(M, N)$ determines a metric on $E(M, N)$.
        \item $(E(M, N), d_{e})$ is a complete metric space whenever $N$ is so.
        \item If $N$ replaced by a (real) normed linear space $Y$, then $E(M, Y)$ becomes a vector space and $d_{e}$ induces a norm $\Vert\cdot\Vert_e$ on $E(M, Y)$. Consequently, $(E(M, Y), \Vert\cdot\Vert_e)$ is a Banach space whenever $Y$ is so. 
    \end{enumerate}
\end{thm}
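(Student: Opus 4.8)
The plan is to prove the three parts in sequence, each building on the previous one. A fact I would isolate at the outset and use throughout is that every $T \in E(M,N)$ automatically satisfies $T(0) = 0$: putting $x = 0$ in $d(T(x),0) \le k\, d(x,0)$ forces $d(T(0),0) \le 0$. For part (1) I would first check that $d_e$ is finite-valued. If $T, S \in E(M,N)$ have constants $k_T, k_S$, then the triangle inequality in $N$ gives $d(T(x),S(x)) \le d(T(x),0) + d(0,S(x)) \le (k_T + k_S)\, d(x,0)$ for every $x$, so $d_e(T,S) \le k_T + k_S < \infty$ and $d_e$ indeed maps into $[0,\infty)$. Nonnegativity and symmetry are inherited directly from $d$. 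For definiteness, $d_e(T,S) = 0$ forces $d(T(x),S(x)) = 0$, hence $T(x) = S(x)$, for every $x \ne 0$, while $T(0) = 0 = S(0)$ by the opening remark, so $T = S$. The triangle inequality follows by dividing $d(T(x),S(x)) \le d(T(x),R(x)) + d(R(x),S(x))$ by $d(x,0)$ for each $x \ne 0$ and taking the supremum.

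For part (2), assume $N$ is complete and let $(T_n)$ be $d_e$-Cauchy. The key estimate is $d(T_n(x),T_m(x)) \le d_e(T_n,T_m)\, d(x,0)$, which shows that $(T_n(x))_n$ is Cauchy in $N$ for each fixed $x \ne 0$; since $N$ is complete I would define $T(x)$ to be its limit and set $T(0) := 0$. To upgrade this to convergence in $d_e$, I would fix $\epsilon > 0$ and $n_0$ with $d_e(T_n,T_m) \le \epsilon$ for $n,m \ge n_0$, then for a fixed $x \ne 0$ and $n \ge n_0$ let $m \to \infty$ in $d(T_n(x),T_m(x)) \le \epsilon\, d(x,0)$; continuity of the metric gives $d(T_n(x),T(x)) \le \epsilon\, d(x,0)$, and taking the supremum over $x \ne 0$ yields $d_e(T_n,T) \le \epsilon$. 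Extensive boundedness of the limit follows from $d(T(x),0) \le d(T(x),T_{n_0}(x)) + d(T_{n_0}(x),0) \le (\epsilon + k)\, d(x,0)$, where $k$ is a constant for $T_{n_0}$, so $T \in E(M,N)$ and $T_n \to T$. This completeness argument is the only genuinely delicate step: passing from the pointwise limit to convergence in the weighted metric $d_e$ requires the standard ``freeze $x$, let $m \to \infty$'' device together with continuity of $d$, and one must separately confirm that the pointwise limit is itself extensively bounded so that it actually lands in $E(M,N)$.

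For part (3), with $N$ replaced by a normed space $Y$ and distinguished point $0 \in Y$, I would first observe that $E(M,Y)$ is a real vector space under pointwise operations, since $\|(\alpha T + \beta S)(x)\| \le (|\alpha| k_T + |\beta| k_S)\, d(x,0)$ and $(\alpha T + \beta S)(0) = 0$. Writing $\mathbf{0}$ for the zero map and setting $\|T\|_e := d_e(T,\mathbf{0}) = \sup_{x \ne 0} \|T(x)\|/d(x,0)$, one has $d_e(T,S) = \sup_{x \ne 0} \|T(x) - S(x)\|/d(x,0) = \|T - S\|_e$, so $d_e$ is exactly the metric induced by $\|\cdot\|_e$. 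Absolute homogeneity and subadditivity of $\|\cdot\|_e$ come from pulling $|\alpha|$ out of the supremum and from the triangle inequality of $\|\cdot\|$, respectively, while $\|T\|_e = 0 \Leftrightarrow T = \mathbf{0}$ is the definiteness already established in part (1). The Banach conclusion is then immediate: a Banach space $Y$ is complete as a metric space, so part (2) gives completeness of $(E(M,Y),d_e)$, and since $d_e$ is induced by $\|\cdot\|_e$ this is precisely the statement that $(E(M,Y),\|\cdot\|_e)$ is a Banach space.
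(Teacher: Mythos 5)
Your proposal is correct and follows essentially the same route as the paper's proof: finiteness of $d_e$ via the triangle inequality through $0$, completeness by the pointwise-limit construction with the ``freeze $x$, let $m \to \infty$'' step followed by the estimate $d(T(x),0) \le d(T(x),T_{n_0}(x)) + d(T_{n_0}(x),0)$ showing the limit lies in $E(M,N)$, and the identity $d_e(T,S) = \Vert T - S \Vert_e$ with homogeneity to obtain the norm. Your explicit isolation of the facts that $T(0)=0$ for every $T \in E(M,N)$ and that $d_e$ satisfies the triangle inequality merely fills in details the paper leaves implicit.
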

\begin{proof}
  Let $f, g \in E(M, N)$. Then there exist $k > 0$ and $l > 0$ such that $d(f(x), 0) \le k d(x, 0)$ and $d(g(x), 0) \le l d(x, 0)$ for all $x \in M$. Now using the triangle inequality, we get 
  $$d(f(x), g(x)) \leq d(f(x), 0)+d(g(x), 0) \le (k + l) d(x, 0)$$ 
  for any $x \in M$.
    Thus for any $x \in M \setminus\{0\}$, we have 
    $$\frac{d(f(x), g(x))}{d(x, 0)} \leq \frac{d(f(x), 0)}{d(x, 0)} + \frac{d(g(x), 0)}{d(x, 0)} \le k + l.$$ 
     Therefore, $d_{e}$ is well-defined. It follows from the definition that $d_{e}(f, g) = d_{e}(g, f)$ for any $f, g \in E(M,N)$.

    Assume that $d_{e}(f, g) = 0$ for some $f, g \in E(M,N)$. Then for all $x \in M\setminus \{0\}$, we have $d(f(x), g(x)) = 0$ . Also $f(0) = 0 = g(0)$ so that $f = g$. Thus $d_{e}$ is a metric on $E(M, N)$.

  Let $(f_{n})$ be a Cauchy sequence in $(E(M, N), d_{e}).$ Then given $\epsilon > 0$, we can find $n_0 \in \mathbb{N}$ such that $d_{e}( f_m, f_n) \le \epsilon$ for all $m, n \ge n_0$. In other words, $d(f_m(x) , f_n(x)) \le \epsilon d(x, 0)$ for all $x \in M$ if $m, n \geq n_0$. In particular, for each $x \in M$, $(f_{n}(x))$ is Cauchy in $N$. Since $N$ is complete, $(f(x_n))$ converges to some $y \in N$. As $y$ is determined by $x \in M$ uniquely, we get a map $f: M \to N$ such that $f_{n}$ converges to $f$ point-wise. Also then 
	$$d( f(x) , f_n(x)) = \lim\limits_{m \to \infty} d( f_{m}(x) , f_{n_0}(x)) \le \epsilon d(x, 0)$$ 
	for all $x \in M$, if $n \ge n_0$. Therefore, 
	$$d(f(x),0)  \le d(f(x), f_{n_0}(x))  + d( f_{n_0}(x),0)  \le (\epsilon + d(f_{n_0},0)) d(x, 0)$$ 
	for all $x \in M$. Thus $f \in E(M, N)$. Also then 
	$$d_{e} (f, f_n) = \sup\limits_{x \neq 0, x \in M} \frac{d( f(x), f_n(x))}{d(x,0)} \le \epsilon$$ 
	for all $n \geq n_0$. Thus $(E(M, N), d_{e})$ is complete.

 Now consider $Y$ to be a (real) normed linear space. It is routine to check that $E(M, Y)$ is a (real) vector space under pointwise addition and scalar multiplication. Again, for $f, g \in E(M, Y)$, 
 $$d_{e}(f, g) = \sup\limits_{x \neq 0} \frac{\Vert f(x)- g(x) \Vert}{d(x, 0)} =d_{e}(f - g, 0).$$  
 Further, for $\alpha \in \mathbb{K}$, 
 $$d_e(\alpha f,\alpha g)=\sup\limits_{x \neq 0} \frac{\|\alpha f(x)- \alpha g(x)\|}{d(x, 0)}= |\alpha| \sup\limits_{x \neq 0} \frac{\Vert f(x)- g(x) \Vert}{d(x, 0)} = \vert \alpha \vert d_e(f, g).$$ 
 Thus the metric $d_{e}$ induces a norm  $\Vert\cdot\Vert_e$ on $E(M, Y)$ given by 
 $$\Vert f \Vert_e = d_e(f, 0)$$
 for all $f \in E(M, Y)$ whenever $Y$ is a (real) normed linear space.

 In particular, if $Y$ is a Banach space, then  $(E(M, Y), \Vert\cdot\Vert_e)$ is also a Banach space.
\end{proof}
\begin{rem}
   In general, there is no specific metric defined on $Lip_{0}(M,N)$. However, being a subspace of $E(M, N)$, we can provide the restriction of $d_e$ on it. But we do not know whether, in general, $Lip_0(M, N)$ is complete with this metric or not, even assuming the completeness of $N$.
\end{rem}
\begin{exm}
	In general, $E(M, N)$ and $C(M, N)$ are distinct spaces. For example, let $M = N = \mathbb{R}$ and consider the functions 
		\[
		g(x) =
		\begin{cases}
			2 &\text{if } x =1 \\
			x &\text{if } x \neq 1

		\end{cases}
		\]
		and $h(x)= x^{2}$. Then $g \in E(\mathbb{R}, \mathbb{R}) \setminus C(\mathbb{R}, \mathbb{R}) $ and $ h \in C(\mathbb{R}, \mathbb{R}) \setminus E(\mathbb{R}, \mathbb{R}).$
\end{exm}
 Now we consider the space $E_c(M, Y) = C(M, Y) \cap E(M, Y)$ as a normed subspace of $E(M, Y)$. 

 \begin{prop}
     Consider $E_c(M, N) := C(M, N) \cap E(M, N)$ where $M$ and $N$ are metric spaces with distinguished points as $0$.  Then $E_c(M, N)$ is a closed subspace of $E(M, N)$ and hence is complete whenever $N$ is so.
 \end{prop}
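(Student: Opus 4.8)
The plan is to show that $E_c(M,N)$ is closed in $(E(M,N), d_e)$ and then invoke the theorem above, since a closed subset of a complete metric space is complete and $(E(M,N), d_e)$ is complete whenever $N$ is. Accordingly, I would take a sequence $(f_n)$ in $E_c(M,N)$ converging to some $f$ in $(E(M,N), d_e)$ and prove that the limit $f$ is continuous; the membership $f \in E(M,N)$ is automatic, as $E(M,N)$ is the ambient space, so the only thing requiring work is $f \in C(M,N)$.

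The first useful observation is that every extensively bounded map is automatically continuous at the distinguished point $0$: setting $x = 0$ in the defining inequality forces $f(0) = 0$, and then $d(f(x), 0) \le k\, d(x,0) \to 0$ as $x \to 0$. Hence continuity of the limit $f$ at $0$ is free, and I only need to verify continuity at an arbitrary point $x_0 \ne 0$.

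The key point is that convergence in $d_e$ upgrades to uniform convergence on any set of bounded $d(\cdot, 0)$-distance, because $d(f_n(x), f(x)) \le d_e(f_n, f)\, d(x,0)$ for all $x$. Fixing $x_0 \ne 0$ and setting $R := d(x_0, 0) + 1$, every $x$ in the ball $\{x : d(x, x_0) \le 1\}$ satisfies $d(x,0) \le R$, so on this ball $d(f_n(x), f(x)) \le R\, d_e(f_n, f)$. I would then run the standard $\epsilon/3$ argument: given $\delta > 0$, choose $n$ with $R\, d_e(f_n, f) < \delta/3$, use the continuity of $f_n$ at $x_0$ to find $\eta \in (0,1]$ controlling $d(f_n(x), f_n(x_0)) < \delta/3$ for $d(x,x_0) < \eta$, and combine the three estimates via the triangle inequality to conclude $d(f(x), f(x_0)) < \delta$ for $x$ close to $x_0$. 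This establishes continuity of $f$ at $x_0$, hence $f \in E_c(M,N)$, and $E_c(M,N)$ is closed.

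Completeness then follows immediately: a closed subset of a complete metric space is complete, and by the theorem $(E(M,N), d_e)$ is complete when $N$ is. When $N$ is a normed linear space $Y$, I would additionally note that $E_c(M,Y)$ is genuinely a linear subspace, since sums and scalar multiples of continuous maps are continuous and $E(M,Y)$ is already a vector space. The only delicate step is the uniform-convergence upgrade near $x_0$; once one observes that the weight $d(x,0)$ is bounded on a neighbourhood of any fixed point, the rest is routine, and the separate treatment of the distinguished point $0$ (where continuity is automatic) keeps the argument clean.
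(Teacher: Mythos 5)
Your proposal is correct and follows essentially the same route as the paper: closedness via a three-term triangle-inequality ($\epsilon/3$) argument, exploiting that the weight $d(\cdot,0)$ is bounded on a neighbourhood of any fixed point $x_0 \ne 0$, followed by the standard fact that a closed subset of a complete metric space is complete. Your one genuine (and valid) streamlining is the observation that any $f \in E(M,N)$ satisfies $f(0)=0$ and $d(f(x),0) \le k\,d(x,0)$, so continuity at $0$ is automatic, which renders the paper's separate Step~I unnecessary.
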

 \begin{proof}
     Let $(f_{n})$ be a sequence in $E_c(M, N)$ such that it converges to $f \in E(M, N)$. We show that $f$ is continuous.
     
     {\bf Step I.} Continuity of $f$ at $0 \in M.$
     
     Since $f_{n} \to f$, given $\epsilon > 0$, there exists $n_{0} \in \mathbb{N}$ such that $d_{e}( f_n , f ) < \epsilon$ for all $n \ge n_0$. Thus $d(f_{n}(x), f(x)) \leq \epsilon d(x, 0)$ for all $n \geq n_{0}$ and $x \in M$. Now $f_{n_{0}}$ is continuous at $0$ so there exists $\delta > 0$ such that $d(f_{n_{0}}(x),0) < \frac{\epsilon}{2}$ whenever $d(x, 0) < \delta$. (In fact, $f_{n_0}(0) = 0$.) Consider $\delta^{'} = min\{\delta, \frac{1}{2}\}> 0$, then for $d(x, 0) < \delta^{'}$, we have 
     \begin{eqnarray*}
         d(f(x), f(0)) = d( f(x),0) &\leq& d (f_{n_{0}}(x), f(x)) +  d(f_{n_{0}}(x),0) \\
         &<& \epsilon \ d(x, 0) + \frac{\epsilon}{2}\\
         &\le& \frac{\epsilon}{2} + \frac{\epsilon}{2}= \epsilon.
     \end{eqnarray*}
     Thus $f$ is continuous at $0.$
     
     {\bf Step II.} Continuity of $f$ at $x_{0} \in M\setminus \{0\}$. 
     
      Since $f_{n} \to f$, given $\epsilon > 0$, we can find $n_1 \in \mathbb{N}$ such that 
      $$ d(f_{n}(x), f(x)) \le \frac{\epsilon}{4 ~ d(x_0, 0)} d(x, 0)$$ 
      for all $n \geq n_1$ and $x \in M$. Also, as $f_{n_{0}}$ is continuous at $x_{0}$, there exists $\delta > 0$ such that $ d(f_{n_1}(x), f_{n_1}(x_0)) < \frac{\epsilon}{4},$ whenever $d(x, x_{0}) < \delta$. Choose $\delta^{'} = min\{d(x_{0},0), \delta\} > 0.$ Then, for $x \in M$ with $d(x,x_{0}) < \delta^{'}$, we have 
\begin{eqnarray*}
    d (f(x),f(x_0)) &\le& d(f(x),f_{n_1}(x)) + d(f_{n_1}(x), f_{n_1}(x_0)) \\ 
    &+& d (f_{n_1}(x_0), f(x_0)) \\
    &<& \frac{\epsilon}{4\ d(x_{0},0)} d(x,0) + \frac{\epsilon}{4} + \frac{\epsilon}{4 \ d(x_{0},0)} d(x_0,0)\\
    &\leq& \frac{\epsilon \ (d(x,x_{0})+d(x_{0},0))}{4 \ d(x_{0},0)} + \frac{\epsilon}{2}\\
    &<& \frac{3 \ \epsilon}{4} + \frac{\epsilon}{4 \ d(x_{0},0)} \delta^{'} \\ 
    &\le& \frac{3 \ \epsilon}{4} + \frac{\epsilon}{4 \ d(x_{0},0)} d(x_{0}, 0) = \epsilon.
\end{eqnarray*}
	Thus $f$ is continuous in $M$, that is, $f \in E_c(M, N)$ and consequently, we conclude that 
    $E_c(M, N)$ is a closed subspace of $E(M, N)$. 
    \end{proof}
 \begin{exm} 
 	\begin{enumerate}
 		\item Consider $f : \mathbb{R} \xrightarrow{} \mathbb{R}$ such that $f(x)=xsin(x)$ then $f \in E_{c}(\mathbb{R}, \mathbb{R})$ with $\|f\|_{e}=1$, but $f$ is not uniformly continuous as $f(2n\pi + \frac{1}{n})-f(2n\pi)$ converges to $2\pi$; hence $f$ is not Lipschitz. Thus $f \in E_{c}(\mathbb{R}, \mathbb{R}) \setminus Lip_{0}(\mathbb{R}, \mathbb{R}).$
 		\item From the above example, we have that $E_{c}(\mathbb{R}, \mathbb{R})$ is not contained in the set of all uniformly continuous maps from $\mathbb{R}$ to $\mathbb{R}.$ Also, the map $f(x)=\frac{1}{1+ x^{2}} \ \forall x \in \mathbb{R}$ is uniformly continuous but not in $E_{c}(\mathbb{R}, \mathbb{R}).$ 
 	\end{enumerate}	
\end{exm} 
\begin{rem} \label{rem3.7}
\begin{enumerate}
	\item Let $M$ be a metric space with the distinguished point $0$ and $Y$ be a normed linear space, then $Lip_{0}(M, Y) \subset E_{c}(M, Y)$ with $\|f\|_{e} \leq Lip(f)$ for any $f \in Lip_{0}(M, Y) $. In fact, if $f \in Lip_{0}(M, Y)$, then $\|f(x)\| \leq Lip(f) d(x,0)$ for all $x \in M$. Thus $\|f\|_{e} \leq Lip(f)$.
    \item For any normed linear spaces $X$ and $Y$, we have 
    $$L(X, Y) \subset Lip_{0}(X, Y) \subset E_{c}(X, Y)$$ 
    with $\|T\|=Lip(T)=\|T\|_{e}$, whenever $T \in L(X, Y)$, where $L(X, Y)$ is the space of all bounded linear operators from $X$ to $Y$. To see this, let $T \in L(X, Y)$. Then 
    $$\Vert T \Vert_{e} = \sup\limits_{x \neq 0} \frac{\|T(x)\|}{\|x\|}=\|T\|$$ 
    and 
    $$Lip(T) = \sup\limits_{x \neq y} \frac{\|T(x)-T(y)\|}{\|x-y\|}= \Vert T \Vert.$$
    \end{enumerate}
\end{rem}
\begin{rem} \label{rem1.6}
	\begin{enumerate}
		\item In general, $Lip(.)$ and $\|.\|_{e}$ are not equivalent in $Lip_{0}(M, \mathbb{R}).$ To show this, we consider $M =[0,1]$ and the sequence of functions $f_{n}(x)=x^{n}$. Then $f_{n} \in Lip_{0}([0,1], \mathbb{R})$ with $\|f_{n}\|_{e}=1$ but $Lip(f_{n})=n $ for all $n \in \mathbb{N}$. 
		\item By Remark \ref{rem3.7}(1), we have $\overline{Lip_{0}(M, \mathbb{R})}^{Lip(.)} \subset \overline{Lip_{0}(M, \mathbb{R})}^{\|.\|_{e}}$. Also, by (1), the inclusion is proper.
	\end{enumerate}   
\end{rem}
\begin{exm}
    Consider the metric space $M=\{\frac{1}{n}: n \in \mathbb{N}\} \cup \{0\}$ with $0$ as the distinguished point. Then 
    $$E(M, \mathbb{R}) = \{(x_{n}) \in c_{0} : |x_{n}| \leq k.\frac{1}{n} \mbox{\ for \ all } n \in \mathbb{N}\}.$$
    \begin{proof}
        Let $f \in E(M, \mathbb{R}).$ Then $|f(\frac{1}{n})| \leq \frac{1}{n}  \|f\|_{e}$ and $\ f(0)=0$. Put $x_{n}=f(\frac{1}{n})$. Then $(x_{n}) \in c_{0}$ and $|x_{n}|\leq \|f\|_{e}. \frac{1}{n}$ for all $n \in \mathbb{N}.$ Conversely, consider $(x_{n}) \in c_{0}$ with $|x_{n}| \leq k.\frac{1}{n}$ for some $k> 0$ and all $n \in \mathbb{N}.$ Define     
$f(\frac{1}{n}) = x_{n}$ for $n \in \mathbb{N}$ and $f(0)=0$ then $f \in E(M, \mathbb{R}).$
\end{proof}
\end{exm}

\section{Dilating extensively bounded mappings}

Before we proceed further, we shall fix some notations. We denote the space $E(M, \mathbb{R})$ by $M^{e}$ and therefore, $E(E(M, \mathbb{R}), \mathbb{R})$ by $M^{ee}$ and so on. 
In this section, we shall prove that an extensively bounded mapping between metric spaces can be dilated to a Lipschitz mapping between bigger metric spaces and an extensively bounded continuous mapping from a metric space into a normed linear space can be dilated to a bounded linear mapping between (bigger) Banach spaces.
\begin{prop} \label{prop check}
  Let $M$ be a metric space with $0$ as a distinguished point. For $x \in M$ we define $\delta_{x}^{e}: M^{e} \to \mathbb{R}$ given by $\delta_{x}^{e}(f) = f(x)$ for all $f \in M^{e}$. Then $\delta_{x}^{e}$ is a bounded linear functional on $M^{e}$ with $\Vert \delta_{x}^{e} \Vert = d(x, 0)$. In general, $\|\delta_{x}^{e} - \delta_{y}^{e}\|\geq d(x, y)$ for $x, y \in M$. Next, consider the mapping $\delta_M^e: M \to (M^e)^*$ given by $\delta_M^e(x) = \delta_x^e$ for all $x \in M$. Then $\delta_M^e$ is extensively bounded. 
\end{prop}
\begin{proof}
	Clearly, $\delta_{x}^{e}$ is linear. Also for any $f \in E(M, \mathbb{R})$ with $\Vert f \Vert_e \le 1$, we have 
	$$\vert \delta_{x}^{e}(f) \vert = \vert f(x) \vert \le d(x, 0)$$ 
	so that $\delta_{x}^{e}$ is bounded with $\Vert \delta_{x}^{e} \Vert \le d(x, 0)$. Consider $f_1: M \to \mathbb{R}$ given by $f_1(z) = d(z, 0)$ for all $z \in M$. Then $f_1 \in M^{e}$ with $\Vert f_1 \Vert_e = 1$. Thus $\Vert \delta_{x}^{e} \Vert \ge \delta_{x}^{e}(f_1) = f_1(x) = d(x, 0)$ so that $\Vert \delta_{x}^{e} \Vert = d(x, 0)$ for all $x \in M$. 
	Hence, the map $\delta_{M}^{e}$ is extensively bounded. 

 Let $x, y \in M$ and define $g : M \xrightarrow{} \mathbb{R}$ given by $g(z) = d(y, z)- d(y,0)$ for all $z \in M$. Then $g \in M^{e}$ with $\|g\|_{e} \leq 1$. Therefore, $$\|\delta_{x}^{e} - \delta_{y}^{e}\|= \sup\limits_{f \in M^{e}, \ \|f\|_{e} \leq 1} |f(x)-f(y)| \geq |g(x)-g(y)| = d(x,y).$$
\end{proof} 
\begin{rem}
    Since $\delta_{x}^{e}$ is a linear map, it follows from Remark \ref{rem3.7} that $\|\delta_{x}^{e}\| = Lip(\delta_{x}^{e}) = \Vert \delta_{x}^{e} \Vert_e$.
\end{rem}
When $M$ is replaced by a normed linear space $X$, $x \mapsto \delta_{x}^{e}$ is a (non-linear) norm preserving mapping from $X$ into $(X^{e})^{\ast}$. We consider the closed subspace $F_{e}(M) = \overline{span \left\{\delta_{x}^{e} : x \in M\right\}}$ of $(M^{e})^{\ast}$. Then $\delta_{M}^{e} : M \xrightarrow{} F_{e}(M)$ given by $\delta_{M}^{e}(x)= \delta_{x}^{e}$ is a norm preserving (and therefore, extensively bounded) non-linear map such that $\overline{span\left\{\delta_M^e(M)\right\}} = F_e(M)$. Let us recall that when $M^e$ is replaced by $Lip_0(M, \mathbb{R})$, a space similar to $F_e(M)$ is called the Lipschitz-free space of $M$. As is in the case of Lipschitz functions, we show that $M^e$ is also a dual space. Actually, we prove more. 
\begin{thm}\label{lin}
    Let $M$ be a metric space and $Y$ a Banach space and consider $T \in E(M, Y)$. Then there exists a unique bounded linear map $\hat{T} : F_{e}(M) \to Y$ such that $\hat{T} \circ \delta_{M}^{e} = T$ with $\|\hat{T}\|=\|T\|_{e}$. Further, the mapping $\Phi_M^Y: E(M, Y) \xrightarrow{} L(F_{e}(M),Y)$, given by $\Phi_M^Y(T)= \hat{T}$ for $T \in E(M, Y)$, is a surjective linear isometry. We shall write simply $\Phi$ for $\Phi_M^Y$, if it is clear from the context.
\end{thm}
\begin{proof}
    Define $\hat{T} :span\{\delta_{x}^{e} : x \in M\} \xrightarrow{} Y$ by $\hat{T} \left(\sum\limits_{i=1}^{n} \alpha_{i} \delta_{x_{i}}^{e}\right)= \sum\limits_{i=1}^{n} \alpha_{i} T(x_{i})$.
    Then $\hat{T}$ is linear with $\hat{T} \circ \delta_{X}^{e} = T$. Also  
     \begin{eqnarray*}
         \left\Vert\sum\limits_{i=1}^{n} \alpha_{i} T(x_{i}) \right\Vert &=& \sup\limits_{y^{\ast} \in B_{Y^{\ast}}} \left\Vert\sum\limits_{i=1}^{n} \alpha_{i} y^{\ast}(T(x_{i})) \right\Vert \\
         &=& \|T\|_{e} \sup\limits_{y^{\ast} \in B_{Y^{\ast}} }\left\vert\sum\limits_{i=1}^{n} \alpha_{i} y^{\ast}(\frac{T}{\|T\|_{e}}(x_{i}))\right\vert \\ 
         &\leq& \|T\|_{e}\sup\left\{\left\vert \sum\limits_{i=1}^{n} \alpha_{i} f(x_{i}) \right\vert: f \in X^{e}, \|f\|_{e} \leq 1\right\} \\
         &=&\|T\|_{e} \sup\left\{\left\vert \left(\sum\limits_{i=1}^{n} \alpha_{i} \delta_{x_{i}}^{e}\right)(f)\right\vert: f \in X^{e}, \|f\|_{e} \leq 1\right\}\\
         &=& \|T\|_{e}\left\|\sum\limits_{i=1}^{n} \alpha_{i} \delta_{x_{i}}^{e}\right\|.
     \end{eqnarray*}
 Thus $\hat{T}$ is bounded and linear in $span\left\{\delta_{x}^{e} : x \in M\right\}$ with $\|\hat{T}\| \leq \|T\|_{e}$. Hence it can be extended uniquely to $F_{e}(M)$ as a bounded linear mapping, denoted again as $\hat{T}$, with $\|\hat{T}\| \leq \|T\|_{e}$.    
     
     Further,
     $$\|T\|_{e} = \sup\limits_{x \neq 0} \frac{\|Tx\|}{\|x\|} = \sup\limits_{x \neq 0} \frac{\|\hat{T}(\delta_{x}^{e})\|}{\|\delta_{x}^{e}\|} \leq \|\hat{T}\|.$$ 

     Thus $\Phi$ is a linear isometry. We prove that $\Phi$ is surjective. In fact, if $\xi \in L(F_{e}(X),Y)$, then $\xi \circ \delta_{X}^{e} \in E(X, Y)$ and $\Phi(\xi \circ \delta_{X}^{e}) = \xi$. This completes the proof.
\end{proof}
\begin{rem}
    Taking $Y = \mathbb{R}$, we note that $X^{e}$ is dual of $F_{e}(X)$.
\end{rem}
\begin{defn} \label{def e adjoint}
    For $T \in E(M, Y)$ we define $T^{e}: Y^{e} \xrightarrow{} M^{e}$ by $$T^{e}(f)=f \circ T.$$ Then $T^{e}$ is a bounded linear map with $\|T^{e}\|=\|T\|_{e}$. Now consider $T^{ee} (= (T^e)^e) : M^{ee} \xrightarrow{} Y^{ee}$. Then $T^{ee}(\phi)=\phi \circ T^{e}.$ Thus for $g \in Y^{e}$,  $$\left(T^{ee}(\delta_{x}^{e})\right)(g)=(\delta_{x}^{e} \circ T^{e})(g)= \delta_{x}^{e}(g\circ T)= g(T(x)) = \delta_{(T(x))}^{e}(g),$$ 
    so that $T^{ee}(\delta_{x}^{e}) = \delta_{(T(x))}^{e}.$  Moreover, $\|T^{ee}\|=\|T^{e}\| = \|T\|_{e}.$
\end{defn}
\subsection{Position of $X^{\ast}$ in $X^{e}$}
Let $T \in X^{\ast} \subset X^{e}$. Then we have already proved that $\|T\|=\|T\|_{e}$. Therefore $X^{\ast}$ is a closed subspace of $(X^{e}, \|.\|_{e})$. We explicitly find out the description of this particular closed subspace.
\begin{lem} \label{lem beta}
	Let $X$ be a normed linear space. Then there exists a linear contraction $\beta_{X}^{e} : F_{e}(X) \xrightarrow{} X$.
\end{lem}
\begin{proof}
	Define $\beta_{X}^{e} : span\{\delta_{x}^{e} : x \in X\} \xrightarrow{} X$ given by $$\beta_{X}^{e}\big(\sum\limits_{i=1}^{n}\alpha_{i} \delta_{x_{i}}^{e}\big)= \sum\limits_{i=1}^{n}\alpha_{i} x_{i}.$$
	Then $\beta_{X}^{e}$ is a well-defined linear map. Also,
	\begin{eqnarray*}
		\big\|\sum\limits_{i=1}^{n} \alpha_{i} x_{i}\big\|&=& \sup\bigg\{ \big|\sum\limits_{i=1}^{n} \alpha_{i} x^{\ast}(x_{i})\big| : x^{\ast} \in X^{\ast}, \|x^{\ast}\| \leq 1\bigg\}\\
		&\leq&  \sup\bigg\{ \big|\sum\limits_{i=1}^{n} \alpha_{i} f(x_{i})\big| : f \in X^{e}, \|f\|_{e} \leq 1\bigg\}\\
		&=& \sup\bigg\{ \big|\Big(\sum\limits_{i=1}^{n} \alpha_{i} \delta_{x_{i}}^{e}\Big)(f)\big| : f \in X^{e}, \|f\|_{e} \leq 1\bigg\}\\
		&=& \big\|\sum\limits_{i=1}^{n} \alpha_{i} \delta_{x_{i}}^{e}\big\|.
	\end{eqnarray*}
	Thus $\beta_{X}^{e}$ is a bounded linear contraction with $\beta_{X}^{e} \circ \delta_{X}^{e} = Id_{X}$ and hence can be extended uniquely to $F_{e}(X)$.
\end{proof}
\begin{prop} \label{prop rb duals}
	Let $T \in X^{e}$. Then $T \in X^{\ast}$ if and only if $\mu(T)=0$ for all $\mu \in ker(\beta_{X}^{e})$.
\end{prop}
\begin{proof}
	Let $T \in X^{e}$ such that $\mu(T)=0$ for all $\mu \in ker(\beta_{X}^{e})$.\\
	\underline{claim:} $T \in X^{\ast}$.\\
	Suppose $r \in \mathbb{R}$ and $x \in X$. Now for $r\delta_{x}^{e}+ \delta_{(-rx)}^{e} \in Ker(\beta_{X}^{e})$, $(r\delta_{x}^{e}+ \delta_{(-rx)}^{e})(T)=0$ gives $-rT(x)=T(-rx)$.\\
	Again let $x_{1},x_{2} \in X$. Then $\delta_{(x_{1}+x_{2})}^{e}+ \delta_{(-(x_{1}+x_{2}))}^{e}, \ \delta_{x_{1}}^{e}+\delta_{
		x_{2}}^{e}+\delta_{(-(x_{1}+x_{2}))}^{e}\in ker(\beta_{X}^{e})$.\\
	Therefore, $(\delta_{(x_{1}+x_{2})}^{e}+ \delta_{(-(x_{1}+x_{2}))}^{e})(T)=0$ implies $T(-(x_{1}+x_{2}))= -T(x_{1}+x_{2})$ and $(\delta_{x_{1}}^{e}+\delta_{
		x_{2}}^{e}+\delta_{(-(x_{1}+x_{2}))}^{e})(T)=0$ gives $$T(x_{1})+T(x_{2})= -T(-(x_{1}+x_{2}))=T(x_{1}+x_{2}).$$
	Hence, the claim immediately follows.\\
	Conversely, let $T \in X^{\ast}$ and $\mu \in ker(\beta_{X}^{e})$. Therefore $\mu = \sum\limits_{n=1}^{\infty} a_{n} \delta_{x_{n}}^{e}$  
	for $(x_{n}) \subset X$ and $(a_{n}) \subset \mathbb{R}$ with $\sum\limits_{n=1}^{\infty} a_{n} x_{n} =0$ and hence $$\mu(T)= \sum\limits_{n=1}^{\infty} a_{n} \delta_{x_{n}}^{e} (T)= \sum\limits_{n=1}^{\infty} a_{n} T(x_{n})=T \big(\sum\limits_{n=1}^{\infty} a_{n} x_{n}\big) =T(0)=0.$$
\end{proof}
\begin{cor}
	By Lemma \ref{lem beta} and proposition \ref{prop rb duals}, we conclude that $X^{\ast}$ is precisely the annihilator of $\ker(\beta_{X}^{e})$ in $X^e$.
\end{cor}
\subsection{The (non-linear) operator algebra of extensively bounded mappings} 
Let $X, Y, Z$ be normed linear spaces. Then for $f, f_{1}, f_{2} \in E(X, Y)$ and $g, g_{1}, g_{2} \in E(Y,Z)$, we have  $g \circ f \in E(X,Z)$, $(g_{1}+g_{2}) \circ f = g_{1}\circ f + g_{2} \circ f$ but, in general, $g\circ (f_{1}+f_{2}) \neq g\circ f_{1}+ g \circ f_{2}$. However, when $g \in L(Y, Z) \subset E(Y, Z)$, we have $g\circ (f_{1}+f_{2}) = g\circ f_{1}+ g \circ f_{2}$. Further 
\begin{eqnarray*}
	\|g \circ f\|_{e} = \sup\limits_{x \neq 0} \frac{\|g(f(x))\|}{\|x\|} \leq \|g\|_{e} \sup\limits_{x \neq 0} \frac{\|f(x)\|}{\|x\|} = \|g\|_{e} \|f\|_{e}.
\end{eqnarray*} 
These properties exhibit an `operator ideal' like structure on the space of extensively bounded maps of Banach spaces with a pinch of non-linearity. A similar situation has been observed in the case of Lipschitz maps also. 

In this sense, $E(X, X)$ is a (non-linear) normed algebra with unity and therefore, it is a (non-linear) unital Banach algebra whenever $X$ is complete.

Let us continue with the above discussion. We observe that since $g \circ f \in E(X, Z)$, $\Phi_X^Z(g \circ f)$ makes sense. However, as $\Phi_Y^Z(g)$ can not composed with $\Phi_X^Y(f)$, $\Phi$ is not `multiplicative'.
We dilate Theorem \ref{lin} further to get a `multiplicative' linearization. 
 \begin{thm} \label{thm tilde}
    Let $(M,d)$ be a metric space and $Y$ be a normed linear space, and $T \in E(M,Y)$. Then there exists a unique bounded linear map $\widetilde{T} : F_{e}(M) \xrightarrow{} F_{e}(Y)$ such that $\widetilde{T} \circ \delta_{M}^{e} = \delta_{Y}^{e} \circ T$ with $\|\widetilde{T}\|=\|T\|_{e}$.
\end{thm}
\begin{proof}
    Let us first define the map 
    $$\widetilde{T} : span\left\{\delta_{x}^{e}: x \in M\right\} \to span\left\{\delta_{y}^{e}: y \in Y\right\}$$ 
    given by 
    $$\widetilde{T}\left(\sum\limits_{i=1}^{n} \alpha_{i} \delta_{x_{i}}^{e}\right) = \sum\limits_{i=1}^{n} \alpha_{i} \delta_{Tx_{i}}^{e}.$$
     Therefore by definition $\widetilde{T}$ is a linear map and $\widetilde{T} \circ \delta_{M}^{e} = \delta_{Y}^{e} \circ T$. Now for $x_{1},...,x_{n} \in M$ and the scalars $\alpha_{i}, i=1,2,...,n$
     \begin{eqnarray*}
         \left\|\sum\limits_{i=1}^{n} \alpha_{i} \delta_{x_{i}}^{e}\right\|&=& \sup\left\{ \left|\sum\limits_{i=1}^{n} \alpha_{i} f(x_{i})\right| : f \in M^{e}, \|f\|_{e} \leq 1\right\}\\
         &\geq& \frac{1}{\|T\|_{e}} \sup\left\{ \left|\sum\limits_{i=1}^{n} \alpha_{i} (g \circ T) (x_{i})\right| : g \in Y^{e}, \|g\|_{e} \leq 1\right\}\\
         &=&\frac{1}{\|T\|_{e}} \sup\left\{ \left|\sum\limits_{i=1}^{n} \alpha_{i} \delta_{(T(x_{i}))}^{e}(g)\right| : g \in Y^{e}, \|g\|_{e} \leq 1\right\}\\ 
         &=& \frac{1}{\|T\|_{e}} \left\|\sum\limits_{i=1}^{n} \alpha_{i} \delta_{Tx_{i}}^{e}\right\|.
     \end{eqnarray*}
     Thus 
     $$\left\Vert \widetilde{T}\left(\sum\limits_{i=1}^{n} \alpha_{i} \delta_{x_{i}}^{e}\right) \right\Vert = \left\|\sum\limits_{i=1}^{n} \alpha_{i} \delta_{Tx_{i}}^{e}\right\| \leq\|T\|_{e}\left\|\sum\limits_{i=1}^{n} \alpha_{i} \delta_{x_{i}}^{e}\right\|$$ 
     so that $\widetilde{T}$ is bounded. We extend $\widetilde{T}$ uniquely to $F_{e}(M)$ and retain the same symbol. Thus $\widetilde{T} \in L(F_e(M), F_e(Y))$ with $\|\widetilde{T}\| \leq \|T\|_{e}$. Again, 
     $$\|T\|_{e} =\sup\limits_{x \neq 0} \frac{\|Tx\|}{d(x,0)}= \sup\limits_{x \neq 0} \frac{\|\delta_{Tx}^{e}\|}{\|\delta_{x}^{e}\|}= \sup\limits_{x \neq 0} \frac{\|\widetilde{T}(\delta_{x}^{e})\|}{\|\delta_{x}^{e}\|} \leq \|\widetilde{T}\|$$
     so that $\Vert \widetilde{T} \Vert = \Vert T \Vert_e$. 
     
     Let $S \in L(F_e(M), F_e(Y))$ be such that $S \circ \delta_{M}^{e} = \delta_{Y}^{e} \circ T$. Then $\widetilde{T} \circ \delta_{M}^{e} = S \circ \delta_{M}^{e}$ so that $\widetilde{T} (\delta_{x}^{e}) = S (\delta_{x}^{e})$ for all $x \in M$. Since $\widetilde{T}$ and $S$ are linear and bounded, we get $\widetilde{T}(\gamma) = S(\gamma)$ for all $\gamma \in F_e(M)$. Thus $S = \widetilde{T}$. 
\end{proof}
\begin{lem} \label{cor tilde is multiplicative}
     Let $W, X, Y, Z$ be normed linear spaces and $U \in E(W, X)$, $T \in E(X, Y)$, $S \in E(Y, Z)$. Then $(S \circ T)^{\hat{}} = \hat{S} \circ \widetilde{T}$ and $\widetilde{S \circ T} = \widetilde{S} \circ \widetilde{T}$.
\end{lem}
\begin{proof}
    Applying Theorems \ref{lin} and \ref{thm tilde}, we get unique linear maps $\hat{S} \in L(F_e(Y), Z)$, $\widetilde{S} \in L(F_{e}(Y), F_{e}(Z))$ and $\widetilde{T} \in L(F_{e}(X), F_{e}(Y))$ such that $\hat{S} \circ \delta_Y^e = S$, $\widetilde{S} \circ \delta_{Y}^{e} = \delta_{Z}^{e} \circ S$ and $\widetilde{T} \circ \delta_{X}^{e} = \delta_{Y}^{e} \circ T$. Further  
    $$\hat{S} \circ \widetilde{T} \circ \delta_X^e = \hat{S} \circ \delta_Y^e \circ T = S \circ T = (S \circ T)^{\hat{}} \circ \delta_X^e$$ 
    and  
    \begin{eqnarray*} \label{eqn t1}
        (\widetilde{S} \circ \widetilde{T}) \circ \delta_{X}^{e} = (\widetilde{S} \circ \delta_{Y}^{e}) \circ T = \delta_{Z}^{e} \circ (S \circ T) = \widetilde{S \circ T} \circ \delta_{X}^{e}.
        \end{eqnarray*} 
    Thus $(S \circ T)^{\hat{}} = \hat{S} \circ \widetilde{T}$ and $\widetilde{S \circ T} = \widetilde{S} \circ \widetilde{T}$. 
\end{proof}
\begin{thm} \label{thm psi}
	Let $X$ be a Banach space. Then $E(X, X)$ is a Banach algebra and the map $\Psi_X^X : E(X, X) \xrightarrow{} L(F_{e}(X), F_{e}(X))$ defined by $\Psi_X^X(T)= \widetilde{T}$, is an injective, unital, norm preserving and multiplicative extensively bounded mapping. We simply write $\Psi$ when the spaces are understood clearly.
\end{thm}
\begin{proof}
	From the above discussion, we know that $E(X, X)$ is a non-linear Banach algebra such that $\|.\|_{e}$ is submultiplicative. By Theorem \ref{thm tilde} and the corollary \ref{cor tilde is multiplicative}, we observe that $\Psi$ is a norm preserving, multiplication preserving, extensively bounded map. Also 
	$$\delta_x^e = \delta_X^e \circ Id_X(x) = \Psi(Id_X) \circ \delta_X^e(x) = \Psi(Id_X)(\delta_x^e)$$
	for all $x \in X$. Thus $\Psi(Id_{X})= Id_{F_{e}(X)}$. and to prove the injectivity of $\Psi$, consider $T, S \in E(X, X)$ such that $\widetilde{T}= \widetilde{S}$. Therefore composing $\beta_{X}^{e}$ (defined in Proposition \ref{lem beta}) and $\delta_{X}^{e}$ from the left and right side respectively we get $ \beta_{X}^{e} \circ \widetilde{T} \circ \delta_{X}^{e}= \beta_{X}^{e} \circ \widetilde{S} \circ \delta_{X}^{e}. \ i.e. \ T=S$.
	This completes the proof.
\end{proof}
\begin{rem}
	Note that, though $L(F_{e}(X), F_{e}(X))$ is a unital Banach algebra in the usual sense, $\Psi (E(X, X))$ is not as $\Psi$ is not linear. In fact, $\Psi (E(X, X))$ is not even a (non-linear) Banach algebra like $E(X, X)$.
\end{rem}
\begin{prop} \label{r bw phi and psi}
    Let $X$, $Y$, $Z$ and $W$ be Banach spaces. Assume $ U \in E(X, Y)$, $T \in E(Y, Z)$ and $S \in E(Z, W)$. Then 
    $$\Phi_X^W(S \circ T \circ U) = \Phi_Z^W(S) \circ \Psi_Y^Z(T) \circ \Psi_X^Y(U).$$ 
    In particular, if $S \in L(Z, W)$, then 
    $$\Phi_X^W(S \circ T \circ U) = S \circ \Phi_Y^Z(T) \circ \Psi_X^Y(U).$$ 
    \end{prop}
    \begin{proof}
         Applying Theorem \ref{thm tilde}, we get unique $\Psi_X^Y(U) \in L(F_e(X), F_e(Y))$ and $\Psi_Y^Z(T) \in L(F_e(Y), F_e(Z))$ such that $\Psi_X^Y(U) \circ \delta_X^e = \delta_Y^e \circ U$ and $\Psi_Y^Z(T) \circ \delta_Y^e = \delta_Z^e \circ T$. Similarly, by Theorem \ref{lin}, there exist unique $\Phi_Y^Z(T) \in L(F_e(Y), Z)$ and $\Phi_Z^W(S) \in L(F_e(Z), W)$ such that $\Phi_Y^Z(T) \circ \delta_Y^e = T$ and $\Phi_Z^W(S) \circ \delta_Z^e = S$. Therefore, 
         \begin{eqnarray*}
             \Phi_Z^W(S) \circ \Psi_Y^Z(T) \circ \Psi_X^Y(U) \circ \delta_X^e &=&  \Phi_Z^W(S) \circ \Psi_Y^Z(T) \circ \delta_Y^e \circ U \\ 
             &=& \Phi_Z^W(S) \circ \delta_Z^e \circ T \circ U \\
             &=&S \circ T \circ U \\
             &=& \Phi_X^W(S \circ T \circ U) \circ \delta_{X}^e
         \end{eqnarray*}
         and
           \begin{eqnarray*}
    	S \circ \Phi_Y^Z(T) \circ \Psi_X^Y(U) \circ \delta_X^e &=& S \circ \Phi_Y^Z(T) \circ \delta_Y^e \circ U \\ 
    	&=& S \circ T \circ U \\ 
    	&=& \Phi_X^W(S \circ T \circ U) \circ \delta_X^e.
    \end{eqnarray*}
    Thus by Theorem \ref{lin}, we get the result.
 \end{proof}
 \begin{rem}
     The notions introduced in \ref{def e adjoint} can be rephrased in the following way using the maps $\Phi$ and $\Psi$ as defined in \ref{lin} and \ref{thm psi}, respectively. Let $T \in E(X, Y)$. Then $T^e \in L(Y^e, X^e)$ so that for $f \in Y^e$, we have 
     \begin{eqnarray*}
         \left(\Phi_X^{\mathbb{R}}\right)^{-1} \circ \Psi(T)^{\ast} \circ \Phi_Y^{\mathbb{R}}(f) &=&   \left(\Phi_X^{\mathbb{R}}\right)^{-1} \left(\Psi(T)^{\ast} \left(\Phi_Y^{\mathbb{R}}(f)\right)\right)\\
         &=& \Psi(T)^{\ast} \left(\Phi_Y^{\mathbb{R}}(f)\right) \circ \delta_X^e \\
         &=& \Phi_Y^{\mathbb{R}}(f) \circ \Psi(T) \circ \delta_X^e \\
         &=& \Phi_Y^{\mathbb{R}}(f) \circ \delta_Y^e \circ T \\
         &=& f \circ T = T^e (f),
     \end{eqnarray*}
     Thus $T^e =  \left(\Phi_X^{\mathbb{R}}\right)^{-1} \circ \Psi(T)^{\ast} \circ \Phi_Y^{\mathbb{R}}$. In a similar manner, we can show that $T^e \big|_{Y^{\ast}} = \left(\Phi_X^{\mathbb{R}}\right)^{-1} \circ \Phi(T)^{\ast}$.
 \end{rem}
\section{Operator ideals of extensively bounded maps} \label{sec 3}
As an application of the algebraic structure of extensively bounded mappings between Banach spaces, we introduce the notion of an extensively bounded operator ideal (or, $E$-operator ideal, in short), replicating the notion of  operator ideal of linear mappings as presented in \cite{TNOI} and \cite{OI}. We begin with the following notion. 
\begin{defn}
    For Banach spaces $X$ and $Y$, we say that $T \in E(X, Y)$ is of finite rank whenever $span(T(X))$ is a finite dimensional subspace of $Y$. The set of all finite rank extensively bounded maps from $X$ to $Y$ is denoted by $F_E(X, Y)$.
\end{defn}
\begin{defn}
    Let $E$ denote the class of extensively bounded mappings and let $L$ denote the class of bounded linear mappings between an arbitrary pair of Banach spaces.  An extensively bounded operator ideal (or, $E$-operator ideal, in short) $\mathcal{I}_{E}$ is a subclass of $E$ satisfying the following conditions: 
\begin{enumerate}
	\item $\mathcal{I}_{E}(X, Y)$ is a subspace of $E(X, Y)$.
	\item $F_E(X, Y) \subset \mathcal{I}_{E}(X, Y)$.
	\item For any $S \in E(Z, X)$, $T \in \mathcal{I}_{E}(X, Y)$ and $U \in L(Y, W)$, we have $UTS \in \mathcal{I}_{E}(Z, W)$. 
\end{enumerate}
	whenever $X$, $Y$, $Z$ and $W$ are Banach spaces.
\end{defn}
We remark that composing only with bounded linear maps on the left in stead of arbitrary extensively bounded maps a compromise to retain the linear structure in $\mathcal{I}_E$. A similar practice is followed in the case of Lipschitz operator ideals. 

Next, we describe our first (and the smallest) $E$-operator ideal. 
\begin{lem}\label{fd}
	Let $X$ and $Y$ be two normed linear spaces  and $T \in E(X,Y)$.  Then $T \in F_E(X, Y)$ if and only if $\Phi(T) \in F(F_e(X), Y)$. 
\end{lem}
\begin{proof}
	Let $T \in E(X, Y)$ so that $\Phi(T) \in L(F_e(X), Y)$. Since $F_e (X) = \overline{span \left\lbrace \delta_{x}^{e} : x \in X \right\rbrace}$, we get 
	$$\Phi(T)(F_e(X)) = \overline{span \left\lbrace \Phi(T) (\delta_{x}^{e}) : x \in X \right\rbrace} = \overline{span \left\lbrace T(x) : x \in X \right\rbrace}.$$ 
	Thus for $T \in F_E(X, Y)$, $\Phi(T)(F_e(X)) = span (T(X))$ is a finite dimensional subspace of $Y$, that is, $\Phi(T) \in F(F_e(X), Y)$. The proof of the converse can be traced back in a similar way. 
\end{proof}
\begin{prop}
    $F_{E}$ is the smallest operator ideal.
\end{prop}
\begin{proof}
    Let $S, T \in F_{E}(X, Y)$ and $\alpha \in \mathbb{K}$, then $$span((\alpha S + T)(X)) \subset span(S(X)) + span(T(X))$$ and thus $\alpha S + T \in F_{E}(X, Y).$ Again, if $S \in E(Z, X)$, $T \in F_{E}(X, Y)$ and  $U \in L(Y, W)$, then $span (UTS(X)) = U(span(T(S(X))))$. This gives the ideal property of $F_{E}(X, Y).$ Further we can easily check that $\|UTS\|_{e} \leq \|U\|\|T\|_{e}\|S\|_{e}.$ Now, by definition $F_{E}(X, Y) \subset \mathcal{I}_E(X, Y)$ whenever $\mathcal{I}_E$ is an $E$-operator ideal. 
\end{proof}
\begin{defn}
	Let $X$ and $Y$ be Banach spaces and $T \in E(X, Y)$. We say that $T$  is called an $E$-compact mapping, if the set $\left\{\frac{T(x)}{\|x\|}: x \neq 0\right\}$ is pre-compact subset of $Y$ and the set of all $E$-compact mappings is denoted by $K_{E}(X, Y)$. 
\end{defn}
\begin{lem} \label{lem coh}
    Let $X$ be a Banach space. Then the closed unit ball of $F_{e}(X)$ is the closed absolute convex hull of $\left\{\frac{\delta_{x}^{e}}{\|x\|}: x \in X\setminus\{0\}\right\}$.
\end{lem}
\begin{proof}
    Put $S = \left\{\frac{\delta_{x}^{e}}{\|x\|}: x \in X\setminus\{0\}\right\} \subset F_{e}(X)$. Then the polar of $S$ is 
    \begin{eqnarray*}
        S^{\circ} &=& \left\{ \xi \in F_{e}(X)^{\ast} : \left\vert \xi \left( \frac{\delta_{x}^{e}}{\Vert x \Vert} \right) \right\vert \leq 1 : x \in X \setminus \{ 0 \} \right\} \\
        &=& \left\{\Phi_X^{\mathbb{R}}(T) \in F_{e}(X)^{\ast} : \left\vert\Phi_X^{\mathbb{R}}(T)\left(\frac{\delta_{x}^{e}}{\|x\|}\right)\right\vert \leq 1 : T \in X^{e}, \  x \in X\setminus\{0\}\right\}\\
        &=& \left\{\Phi_X^{\mathbb{R}}(T) \in F_{e}(X)^{\ast} : \left\vert\frac{\delta_{x}^{e}(T)}{\|x\|}\right\vert \leq 1 : T \in X^{e}, \  x \in X\setminus\{0\}\right\}\\
        &=& \left\{\Phi_X^{\mathbb{R}}(T) \in F_{e}(X)^{\ast} : \left\vert\frac{Tx}{\|x\|}\right\vert \leq 1 : T \in X^{e}, \  x \in X\setminus\{0\}\right\}\\
        &=&\left\{\Phi_X^{\mathbb{R}}(T) \in F_{e}(X)^{\ast} : \|T\|_{e} \leq 1 : T \in X^{e}\right\}\\
        &=& \Phi_X^{\mathbb{R}}\left(B_{X^{e}}\right).
    \end{eqnarray*} 
    Again the pre-polar of $\Phi_X^{\mathbb{R}}\left(B_{X^{e}}\right)$ is 
    \begin{eqnarray*}
        \leftindex^{\circ}{\left(\Phi_X^{\mathbb{R}}\left(B_{X^{e}}\right)\right)} &=& \left\{\gamma \in F_{e}(X) : \left|\Phi_X^{\mathbb{R}}(T)(\gamma)\right|\leq 1 \mbox{ for \ all \ } T \in B_{X^{e}}\right\}\\
        &=& \left\{\gamma \in F_{e}(X) : \left|\gamma(T)\right|\leq 1 \mbox{ for \ all \ } T \in B_{X^{e}}\right\}\\
        &=&\left\{\gamma \in F_{e}(X) : \|\gamma\| \leq 1 \right\}\\
        &=& B_{F_{e}(X)}.
    \end{eqnarray*}
    Moreover as $B_{F_{e}(X)}$ is absolutely convex and closed, the bipolar theorem yeilds 
        $$B_{F_{e}(X)} = \leftindex ^{\circ}{\left(S^{\circ}\right)} = \overline{aco}\left\{\frac{\delta_{x}^{e}}{\|x\|}: x \in X\setminus\{0\}\right\}.$$
\end{proof}
\begin{lem} \label{lem equiv K_{E}() and K()}
     Let $X$ and $Y$ be two normed linear spaces and $T \in E(X,Y)$.  The following are equivalent:
    \begin{enumerate}
        \item $T \in K_{E}(X, Y)$.
        \item $\Phi(T) \in K(F_{e}(X),Y)$.
        \item $T^{e} \big|_{Y^{\ast}}$ is a linear compact operator from $Y^{\ast}$ to $X^{e}$.
    \end{enumerate}
\end{lem}
\begin{proof}
     By Lemma \ref{lem coh}, we get 
    \begin{eqnarray*}
    	\Phi(T)\left(B_{F_{e}(X)}\right) &=& \Phi(T)\left(\overline{aco}\left\{\frac{\delta_{x}^{e}}{\|x\|}: x \in X\setminus\{0\}\right\}\right) \\ 
    	&\subset& \overline{aco}\left\{\frac{Tx}{\|x\|} : 0 \neq x \in X\right\}
    \end{eqnarray*}
	for all $T \in E(X, Y)$. Let $T \in K_{E}(X, Y)$. Then $\left\{\frac{Tx}{\|x\|} : 0 \neq x \in X\right\}$ is precompact. Thus $aco\left\{\frac{Tx}{\|x\|} : 0 \neq x \in X\right\}$ is also precompact so that $\Phi(T) \in K_E(F_e(X), Y)$. 

    Since $\left\{\frac{Tx}{\|x\|} : 0 \neq x \in X\right\} \subset \Phi(T)\left(B_{F_{e}(X)}\right)$, we get $T \in K_E(X, Y)$ whenever $\Phi(T) \in K_E(F_e(X), Y)$. 
    This proves the equivalence of $(1)$ and $(2)$.
	
	Finally, since $T^{e} \big|_{Y^{\ast}} = \Phi^{-1} \circ \left(\Phi(T)\right)^{\ast}$ for any $T \in E(X, Y)$, $(2)$ is equivalent to $(3)$.
\end{proof}
\begin{thm}
$(K_E, \Vert\cdot\Vert_e)$ is a Banach operator ideal. 
\end{thm}
\begin{proof}
	Fix Banach spaces $X$ and $Y$. Let $S, T \in K_{E}(X, Y)$ and $(x_{n})$ be any sequence in $X\setminus \{0\}$. As $S \in K_{E}(X, Y)$, $\left(\frac{S(x_{{n}_{k}})}{\|x_{{n}_{k}}\|}\right)$ is Cauchy in $Y$ for some subsequence $(x_{n_k})$ of $(x_n)$. Again, as $T \in K_{E}(X, Y)$, we further get that  $\left(\frac{T(x_{{n}_{{k}_{l}}})}{\|x_{{n}_{{k}_{l}}}\|}\right)$ is Cauchy in $Y$ for some subsequence $(x_{n_{k_l}})$ of $(x_{n_k})$. Therefore, $\left(\frac{(S + T)(x_{{n}_{{k}_{l}}})}{\|x_{{n}_{{k}_{l}}}\|}\right)$ is a Cauchy subsequence of $\left(\frac{(S + T)(x_{n})}{\|x_{n}\|}\right)$ so that $S + T \in K_E(X, Y)$. Similarly, we can show that for $\alpha \in \mathbb{K}$, we have $\alpha S \in K_{E}(X, Y)$. Thus  $K_{E}(X, Y)$ is a subspace of $E(X, Y).$
 
	Next, we show that $\left(K_{E}(X, Y), \|.\|_{e}\right)$ is a Banach space. 

	Let $(T_{i})$ be a sequence in $K_{E}(X, Y)$ such that $\Vert T_i - T \Vert_e \to 0$ as $i \to \infty$ for some $T \in E(X, Y)$. We show that $T \in K_{E}(X, Y)$. 
	Let $(x_{n})$ be a sequence in $X\setminus \{0\}.$ Since $T_{1}$ is compact there exists a subsequence $(x_{{n}_{1,k}})_{k \in \mathbb{N}}$ such that $\left(\frac{T_{1}(x_{{n}_{1,k}})}{\|x_{{n}_{1,k}}\|}\right)_{k \in \mathbb{N}}$ is a Cauchy subsequence. Similarly $T_{2}$ is compact, therefore there exists $(x_{{n}_{2,k}})_{k \in \mathbb{N}}$ such that $\left(\frac{T_{2}(x_{{n}_{2,k}})}{\|x_{{n}_{2,k}}\|}\right)_{k \in \mathbb{N}}$ is also cauchy. Continuing inductively, we have for each $i \in \mathbb{N}, \ \ \left(\frac{T_{i}(x_{{n}_{i,k}})}{\|x_{{n}_{i,k}}\|}\right)_{k \in \mathbb{N}}$ converge in $Y.$ Consider the diagonal subsequence $x_{n_{k}}:= x_{n_{k,k}}$ for $k \in \mathbb{N}$ and fix $\epsilon>0.$ We prove that $\left(\frac{T(x_{n_{k}})}{\|x_{n_{k}}\|}\right)_{k \in \mathbb{N}}$ converges in $Y$ as well. Since $(T_{i})$ converges to $T,$ there exists $i_{0} \in \mathbb{N}$ such that $\|T-T_{i}\|_{e} < \frac{\epsilon}{3}$ for all $i \geq i_{0}.$ Again $\left(\frac{T_{i}(x_{n_{k}})}{\|x_{n_{k}}\|}\right)_{k \in \mathbb{N}}$ converges implies there exists a natural number $k_{0}$ such that $\left\|\frac{T_{i}(x_{n_{k}})}{\|x_{n_{k}}\|} - \frac{T_{i}(x_{n_{l}})}{\|x_{n_{l}}\|}\right\| < \frac{\epsilon}{3}$ for all $k,l \geq k_{0}.$ Now for all $k,l \geq \max\{k_{0}, i_{0}\}$
	\begin{eqnarray*}
		\left\|\frac{T(x_{n_{k}})}{\|x_{n_{k}}\|} - \frac{T(x_{n_{l}})}{\|x_{n_{l}}\|}\right\| &\leq& \left\|\frac{T(x_{n_{k}})}{\|x_{n_{k}}\|} - \frac{T_{i_{0}}(x_{n_{k}})}{\|x_{n_{k}}\|}\right\| + \left\|\frac{T_{i_{0}}(x_{n_{k}})}{\|x_{n_{k}}\|} - \frac{T_{i_{0}}(x_{n_{l}})}{\|x_{n_{l}}\|}\right\|\\
  &+&\left\|\frac{T_{i_{0}}(x_{n_{l}})}{\|x_{n_{l}}\|} - \frac{T(x_{n_{l}})}{\|x_{n_{l}}\|}\right\|\\
		&\leq& \|T-T_{i_{0}}\|_{e} + \left\|\frac{T_{i_{0}}(x_{n_{k}})}{\|x_{n_{k}}\|} - \frac{T_{i_{0}}(x_{n_{l}})}{\|x_{n_{l}}\|}\right\| + \|T_{i_{0}}-T\|_{e}\\
		&<& \epsilon.
	\end{eqnarray*}
	Hence $K_{E}(X, Y)$ is a Banach space.
    
    Now, we prove the E-operator ideal conditions. It follows from Lemmas \ref{fd} and \ref{lem equiv K_{E}() and K()} that $\Phi(F_E(X, Y)) = F(F_e(X), Y)$ and $\Phi(K_E(X, Y)) = K(F_e(X), Y)$. Since $F(F_e(X), Y) \subset K(F_e(X), Y)$, we deduce that $F_E(X, Y) \subset K_E(X, Y)$.
   
    Finally, let $W$, $X$, $Y$ and $Z$ be Banach spaces and assume that $ U \in E(Z, X)$, $T \in K_{E}(X, Y)$ and $S \in L(Y, W)$. Then by Theorem \ref{thm tilde}, there exists a unique $\tilde{U} \in L(F_e(Z), F_e(X))$ such that $\tilde{U} \circ \delta_Z^e = \delta_X^e \circ U$. Also by Lemma \ref{lem equiv K_{E}() and K()}, there exists a unique $\hat{T} \in K(F_e(X), Y)$ such that $\hat{T} \circ \delta_Y^e = T$. We write $\tilde{U} = \Psi_Z^X(U)$ and $\hat{T} = \Phi_X^Y(T)$.
    Thus from the proposition \ref{r bw phi and psi} we have $\Phi_Z^W(S \circ T \circ U) = S \circ \Phi_X^Y(T) \circ \Psi_Z^X(U)$. Since $K$ is a Banach operator ideal, we have 
    $$\Phi_Z^W(S \circ T \circ U) = S \circ \Phi_X^Y(T) \circ \Psi_Z^X(U)\in K(F_e(Z), W).$$
    Hence by Lemma \ref{lem equiv K_{E}() and K()}, $S \circ T \circ U \in K_E(Z, W)$.  
\end{proof}

\bibliographystyle{plain}

\bibliography{reference}

\end{document}